\providecommand{\U}[1]{\protect\rule{.1in}{.1in}}
\def\citeapos#1{\citeauthor{#1} (\citeyear{#1})}
\providecommand{\U}[1]{\protect\rule{.1in}{.1in}}
\providecommand{\U}[1]{\protect\rule{.1in}{.1in}}
\newtheorem{theorem}{Theorem}
\newtheorem{proposition}{Proposition}
\theoremstyle{remark}
\begin{document}
\title[ ]{Distributionally Robust Groupwise Regularization Estimator}
\author{Blanchet, J.}
\email{jose.blanchet@columbia.edu}
\author{Kang, Y.}
\address{Columbia University. New York, NY 10027, United States.}
\email{yang.kang@columbia.edu}

\keywords{Distributionally Robust Optimization, Group Lasso, Optimal Transport}
\date{\today }
\maketitle

\begin{abstract}
Regularized estimators in the context of group variables have been applied
successfully in model and feature selection in order to preserve
interpretability. We formulate a Distributionally Robust Optimization (DRO)
problem which recovers popular estimators, such as Group Square Root Lasso
(GSRL). Our DRO formulation allows us to interpret GSRL as a game, in which we
learn a regression parameter while an adversary chooses a perturbation of the
data. We wish to pick the parameter to minimize the expected loss under any
plausible model chosen by the adversary - who, on the other hand, wishes to
increase the expected loss. The regularization parameter turns out to be
precisely determined by the amount of perturbation on the training data
allowed by the adversary. In this paper, we introduce a data-driven
(statistical) criterion for the optimal choice of regularization, which we
evaluate asymptotically, in closed form, as the size of the training set
increases. Our easy-to-evaluate regularization formula is compared against
cross-validation, showing good (sometimes superior) performance.

\end{abstract}

\section{Introduction}

Group Lasso (GR-Lasso) estimator is a generalization of the Lasso estimator
(see \citeapos{tibshirani_regression_1996}). The method focuses on variable
selection in settings where some predictive variables, if selected, must be
chosen as a group. For example, in the context of the use of dummy variables
to encode a categorical predictor, the application of the standard Lasso
procedure might result in the algorithm including only a few of the variables
but not all of them, which could make the resulting model difficult to
interpret. Another example, where the GR-Lasso estimator is particularly
useful, arises in the context of feature selection. Once again, a particular
feature might be represented by several variables, which often should be
considered as a group in the variable selection process. \newline The GR-Lasso
estimator was initially developed for the linear regression case (see
\citeapos{yuan2006model}), but a similar group-wise regularization was also
applied to logistic regression in \citeapos{meier2008group}. A brief summary of
GR-Lasso technique type of methods can be found in \citeapos{friedman2010note}.
\newline Recently, \citeapos{bunea2014group} developed a variation of the GR-Lasso
estimator, called the Group-Square-Root-Lasso (GSRL) estimator, which is very
similar to the GR-Lasso estimator. The GSRL is to the GR-Lasso estimator what
sqrt-Lasso, introduced in \citeapos{belloni2011square}, is to the standard Lasso
estimator. In particular, GSRL has a superior advantage over GR-Lasso, namely,
that the regularization parameter can be chosen independently from the
standard deviation of the regression error in order to guarantee the
statistical consistency of the regression estimator (see
\citeapos{belloni2011square}, and \citeapos{bunea2014group}). 
\bigskip\newline
 Our contribution in this paper is to provide a DRO representation for the GSRL
estimator, which is rich in interpretability and which provides insights to
optimally select (using a natural criterion) the regularization parameter
without the need of time-consuming cross-validation. We compute the optimal
regularization choice (based on a simple formula we derive in this paper) and
evaluate its performance empirically. We will show that our method for the
regularization parameter is comparable, and sometimes superior, to
cross-validation. 
\bigskip\newline 
In order to describe our contributions more
precisely, let us briefly describe the GSRL estimator. We choose the context
of linear regression to simplify the exposition, but an entirely analogous
discussion applies to the context of logistic regression. \newline Consider a
given a set of training data $\{(X_{1},Y_{1}),\ldots,(X_{n},Y_{n})\}$. The
input $X_{i}\in\mathbb{R}^{d}$ is a vector of $d$ predicting variables, and
$Y_{i}\in\mathbb{R}$ is the response variable. (Throughout the paper any
vector is understood to be a column vector and the transpose of $x$ is denoted
by $x^{T}$.) We use $\left(  X,Y\right)  $ to denote a generic sample from the
training data set. It is postulated that
\[
Y_{i}=X_{i}^{T}\beta^{\ast}+e_{i},
\]
for some $\beta^{\ast}\in\mathbb{R}^{d}$ and errors $\left\{  e_{1}%
,...,e_{n}\right\}  $. Under suitable statistical assumptions (such as
independence of the samples in the training data), one may be interested in
estimating $\beta^{\ast}$. \newline Underlying, we consider the square loss
function, i.e. $l\left(  x,y;\beta\right)  =\left(  y-\beta^{T}x\right)  ^{2}%
$, for the purpose of this discussion but this choice, as we shall see, is not
necessary. 
\bigskip\newline Throughout the paper we will assume the following group
structure for the space of predictors. There are $\bar{d}\leq d$ mutually
exclusive groups, which form a partition. More precisely, suppose that
$G_{1},\ldots,G_{\bar{d}}$ satisfies that $G_{i}\cap G_{j}=\varnothing$ for
$i\neq j$, that $G_{1}\cup...\cup G_{\bar{d}}=\{1,...,d\}$, and the $G_{i}$'s
are non-empty. We will use $g_{i}$ to denote the cardinality of $G_{i}$ and
shall write $G$ for a generic set in the partition and let $g$ denote the
cardinality of $G$. \newline We shall denote by $x\left(  G\right)
\in\mathbb{R}^{g}$ the sub-vector $x\in\mathbb{R}^{d}$ corresponding to $G$.
So, if $G=\{i_{1},...,i_{g}\}$, then $x\left(  G\right)  =\left(  X_{i_{1}%
},\ldots,X_{i_{g}}\right)  ^{T}$. \newline Next, given $p,s\geq1$, and
$\alpha\in\mathbb{R}_{++}^{\bar{d}}$ (i.e. $\alpha_{i}>0$ for $1\leq i\leq
\bar{d}$) we define for each $x\in\mathbb{R}^{d}$,
\begin{equation}
\left\Vert x\right\Vert _{\alpha\text{-}(p,s)}=\left(  \sum_{i=1}^{\bar{d}%
}\alpha_{i}^{s}\left\Vert x\left(  G_{i}\right)  \right\Vert _{p}^{s}\right)
^{1/s}, \label{alpha_ps_norm}%
\end{equation}
where $\left\Vert x\left(  G_{i}\right)  \right\Vert _{p}$ denotes the
$p$-norm of $x\left(  G_{i}\right)  $ in $\mathbb{R}^{g_{i}}$. (We will study
fundamental properties of $\left\Vert x\right\Vert _{\alpha\text{-}(p,s)}$ as
a norm in Proposition \ref{Thm-Dual-Norm}.) \newline Let $P_{n}$ be the
empirical distribution function, namely,%
\[
P_{n}\left(  dx,dy\right)  :=\frac{1}{n}\sum_{i=1}^{n}\mathcal{\delta
}_{\left\{  (X_{i},Y_{i})\right\}  }(dx,dy).
\]
Throughout out the paper we use the notation $\mathbb{E}_{P}[\cdot]$ to denote
expectation with respect to a probability distribution $P$. \newline The GSRL
estimator takes the form
\[
\min_{\beta}\sqrt{\frac{1}{n}\sum_{i=1}^{n}l\left(  X_{i,}Y_{i};\beta\right)
}+\lambda\left\Vert \beta\right\Vert _{\tilde{g}^{-1}-\left(  2,1\right)
}=\min_{\beta}\left(  \mathbb{E}_{P_{n}}^{1/2}\left[  l\left(  X,Y;\beta
\right)  \right]  +\lambda\left\Vert \beta\right\Vert _{\sqrt{\tilde{g}%
}-\left(  2,1\right)  }\right)  ,
\]
where $\lambda$ is the so-called regularization parameter. The previous
optimization problem can be easily solved using standard convex optimization
techniques as explained in \citeapos{belloni2011square} and \citeapos{bunea2014group}.
\bigskip\newline 
Our contributions in this paper can now be explicitly stated. We
introduce a notion of discrepancy, $\mathcal{D}_{c}\left(  P,P_{n}\right)  $,
discussed in Section \ref{Sect_2}, between $P_{n}$ and any other probability
measure $P$, such that%
\begin{equation}
\min_{\beta}\max_{P:D_{c}\left(  P,P_{n}\right)  \leq\delta}\mathbb{E}%
_{P}^{1/2}\left[  l\left(  X,Y;\beta\right)  \right]  =\min_{\beta}\left(
\mathbb{E}_{P_{n}}^{1/2}\left[  l\left(  X,Y;\beta\right)  \right]
+\delta^{1/2}\left\Vert \beta\right\Vert _{\alpha-\left(  p,s\right)
}\right)  . \label{DRO_I}%
\end{equation}
Using this representation, which we formulate, together with its logistic
regression analogue, in Section \ref{Sect_2_2_1_Lin} and Section
\ref{Sect_2_2_2_Log}, we are able to draw the following insights:
\bigskip

\textbf{I)} GSRL can be interpreted as a game in which we choose a parameter
(i.e. $\beta$) and an adversary chooses a \textquotedblleft
plausible\textquotedblright\ perturbation of the data (i.e. $P$); the
parameter $\delta$ controls the degree in which $P_{n}$ is allowed to be
perturbed to produce $P$. The value of the game is dictated by the expected
loss, under $E_{P}$, of the decision variable $\beta$.

\textbf{II)}\ The set $\mathcal{U}_{\delta}\left(  P_{n}\right)  =\left\{
P:\mathcal{D}_{c}\left(  P,P_{n}\right)  \leq\delta\right\}  $ denotes the set
of distributional uncertainty. It represents the set of plausible variations
of the underlying probabilistic model which are reasonably consistent with the data.

\textbf{III)}\ The DRO representation (\ref{DRO_I}) exposes the role of the
regularization parameter. In particular, because $\lambda=\delta^{1/2}$, we
conclude that $\lambda$ directly controls the size of the distributionally
uncertainty and should be interpreted as the parameter which dictates the
degree to which perturbations or variations of the available data should be considered.

\textbf{IV)} As a consequence of I)\ to III), the DRO\ representation
(\ref{DRO_I}) endows the GSRL estimator with desirable generalization
properties. The GSRL aims at choosing a parameter, $\beta$, which should
perform well for \textit{all} possible probabilistic descriptions which are
plausible given the data. 
\bigskip\newline 
Naturally, it is important to
understand what types of variations or perturbations are measured by the
discrepancy $\mathcal{D}_{c}\left(  P,P_{n}\right)  $. For example, a popular
notion of the discrepancy is the Kullback-Leibler (KL) divergence. However, KL
divergence has the limitation that only considers probability distributions
which are supported precisely on the available training data, and therefore
potentially ignores plausible variations of the data which could have an
adverse impact on generalization risk. \newline In the rest of the paper we
answer the following questions. First, in Section \ref{Sect_2} we explain the
nature of the discrepancy $\mathcal{D}_{c}\left(  P,P_{n}\right)  $, which we
choose as an Optimal Transport discrepancy. We will see that $\mathcal{D}%
_{c}\left(  P,P_{n}\right)  $ can be computed using a linear program. \newline
Intuitively, $\mathcal{D}_{c}\left(  P,P_{n}\right)  $ represents the minimal
transportation cost for moving the mass encoded by $P_{n}$ into a sinkhole
which is represented by $P$. The cost of moving mass from location $u=\left(
x,y\right)  $ to $w=\left(  x^{\prime},y^{\prime}\right)  $ is encoded by a
cost function $c\left(  u,w\right)  $ which we shall discuss and this will
depend on the $\alpha$-$\left(  p,s\right)  $ norm that we defined in
(\ref{alpha_ps_norm}). The subindex $c$ in $\mathcal{D}_{c}\left(
P,P_{n}\right)  $ represents the dependence on the chosen cost function.
\newline The next item of interest is the choice of $\delta$, again the
discussion of items I)\ to III)\ of the DRO formulation (\ref{DRO_I}) provides
a natural way to optimally choose $\delta$. The idea is that every model
$P\in\mathcal{U}_{\delta}\left(  P_{n}\right)  $ should intuitively represent
a plausible variation of $P_{n}$ and therefore $\beta^{P}=\arg\min\left\{
\mathbb{E}_{P}[l\left(  X,Y;\beta\right)  ]:\beta\right\}  $ is a plausible
estimate of $\beta^{\ast}$. The set $\{\beta^{P}:P\in\mathcal{U}_{\delta
}\left(  P_{n}\right)  \}$ therefore yields a confidence region for
$\beta^{\ast}$ which is increasing in size as $\delta$ increases. Hence, it is
natural to minimize $\delta$ to guarantee a target confidence level (say
95\%). In Section \ref{Sec-Asymptotic} we explain how this optimal choice can
be asymptotically computed as $n\rightarrow\infty$. \newline Finally, it is of
interest to investigate if the optimal choice of $\delta$ (and thus of
$\lambda$) actually performs well in practice. We compare performance of our
(asymptotically)\ optimal choice of $\lambda$ against cross-validation
empirically in Section \ref{Sec-Numerical}. We conclude that our choice is
quite comparable to cross validation. 
\bigskip\newline Before we continue with the
program that we have outlined, we wish to conclude this Introduction with a
brief discussion of work related to the methods discussed in this paper.
Connections between regularized estimators and robust optimization
formulations have been studied in the literature. For example, the work of
\citeapos{xu_robust_2009} investigates determinist perturbations on the predictor
variables to quantify uncertainty. In contrast, our DRO approach quantifies
perturbations from the empirical measure. This distinction allows us to
statistical theory which is key to optimize the size of the uncertainty,
$\delta$ (and thus the regularization parameter) in a data-driven way. The
work of \citeapos{shafieezadeh-abadeh_distributionally_2015} provides connections
to regularization in the setting of logistic regression in an approximate
form. More importantly, \citeapos{shafieezadeh-abadeh_distributionally_2015}
propose a data-driven way to choose the size of uncertainty, $\delta$, which
is based on the concentration of measure results. The concentration of measure
method depends on restrictive assumptions and leads to suboptimal choices,
which deteriorate poorly in high dimensions. 
\bigskip\newline The present work is a
continuation of the line of research development in
\citeapos{blanchet2016robust}, which
concentrates only on classical regularized estimators without the group
structure. Our current contributions require the development of duality
results behind the $\alpha$-$\left(  p,t\right)  $ norm which closely
parallels that of the standard duality between $l_{p}$ and $l_{q}$ spaces
(with $1/p+1/q=1$) and a number of adaptations and interpretations that are
special to the group setting only.

\section{Optimal Transport and DRO\label{Sect_2}}

\subsection{Defining the optimal transport discrepancy\label{Sect_2_1_OT}}

Let $c:\mathbb{R}^{d+1}\times\mathbb{R}^{d+1}\rightarrow\lbrack0,\infty]$ be
lower semicontinuous and we assume that $c(u,w)=0$ if and only if $u=w$. For
reasons that will become apparent in the sequel, we will refer to $c\left(
\cdot\right)  $ as a cost function. \newline Given two distributions $P$ and
$Q$, with supports $\mathcal{S}_{P}\subseteq\mathbb{R}^{d+1}$ and
$\mathcal{S}_{Q}\subseteq\mathbb{R}^{d+1}$, respectively, we define the
optimal transport discrepancy, $\mathcal{D}_{c}$, via%
\begin{equation}
\mathcal{D}_{c}\left(  P,Q\right)  =\inf_{\pi}\{E_{\pi}\left[  c\left(
U,W\right)  \right]  :\pi\in\mathcal{P}\left(  \mathcal{S}_{P}\times
\mathcal{S}_{Q}\right)  ,\text{ }\pi_{U}=P,\text{ }\pi_{W}=Q\},
\label{Discrepancy_Def}%
\end{equation}
where $\mathcal{P}\left(  \mathcal{S}_{P}\times\mathcal{S}_{Q}\right)  $ is
the set of probability distributions $\pi$ supported on $\mathcal{S}_{P}%
\times\mathcal{S}_{Q}$, and $\pi_{U}$ and $\pi_{W}$ denote the marginals of
$U$ and $W$ under $\pi$, respectively. \newline If, in addition, $c\left(
\cdot\right)  $ is symmetric (i.e. $c\left(  u,w\right)  =c\left(  w,u\right)
$), and there exists $\varrho\geq1$ such that $c^{1/\varrho}\left(
u,w\right)  \leq c^{1/\varrho}\left(  u,v\right)  +c^{1/\varrho}\left(
v,w\right)  $ (i.e. $c^{1/\varrho}\left(  \cdot\right)  $ satisfies the
triangle inequality), it can be easily verified (see
\citeapos{villani_optimal_2008}) that $\mathcal{D}_{c}^{1/\varrho}\left(
P,Q\right)  $ is a metric on the probability measures. For example, if
$c\left(  u,w\right)  =\left\Vert u-w\right\Vert _{q}^{\varrho}$ for $q\geq1$
(where $\left\Vert u-w\right\Vert _{q}$ denotes the $l_{q}$ norm in
$\mathbb{R}^{d+1}$) then $\mathcal{D}_{c}\left(  \cdot\right)  $ is known as
the Wasserstein distance of order $\varrho$. \newline Observe that
(\ref{Discrepancy_Def}) is obtained by solving a linear programming problem.
For example, suppose that $Q=P_{n}$, so $\mathcal{S}_{P_{n}}=\{\left(
X_{i},Y_{i}\right)  \}_{i=1}^{n}$, and let $P\ $ be supported in some finite
set $\mathcal{S}_{P}$ then, using $U=\left(  X,Y\right)  $, we have that
$D_{c}\left(  P,P_{n}\right)  $ is obtained by computing%
\begin{align}
&  \left.  \min_{\pi}\sum_{u\in\mathcal{S}_{P}}\sum_{w\in\mathcal{S}_{P_{n}}%
}c\left(  u,w\right)  \pi\left(  u,w\right)  \right. \label{LP}\\
\text{s.t.}  &  \left.  \sum_{u\in\mathcal{S}_{P}}\pi\left(  u,w\right)
=\frac{1}{n}\text{ }\forall\text{ }w\in\mathcal{S}_{P_{n}}\right. \nonumber\\
&  \left.  \sum_{w\in\mathcal{S}_{P_{n}}}\pi\left(  u,w\right)  =P\left(
\left\{  u\right\}  \right)  \text{ }\forall\text{ }u\in\mathcal{S}_{P}\right.
\nonumber\\
&  \left.  \pi\left(  u,w\right)  \geq0\text{ }\forall\text{ }\left(
u,w\right)  \in\mathcal{S}_{P}\times\mathcal{S}_{P_{n}}\right.  .\nonumber
\end{align}
A completely analogous linear program (LP), albeit an infinite dimensional
one, can be defined if $\mathcal{S}_{P}$ has infinitely many elements. This LP
has been extensively studied in great generality in the context of Optimal
Transport under the name of Kantorovich's problem (see
\citeapos{villani_optimal_2008})). \newline Note that Kantorovich's problem is
always feasible (take $\pi$ with independent marginals, for example).
Moreover, under our assumptions on $c$, if the optimal value is finite, then
there is an optimal solution $\pi^{\ast}$ (see Chapter 1 of
\citeapos{villani_optimal_2008})). \newline It is clear from the formulation of
the previous LP\ that $\mathcal{D}_{c}\left(  P,P_{n}\right)  $ can be
interpreted as the minimal cost of transporting mass from $P_{n}$ to $P$,
assuming that the marginal cost of transporting the mass from $u\in
\mathcal{S}_{P}$ to $w\in\mathcal{S}_{P_{n}}$ is $c\left(  u,w\right)  $. It
is also not difficult to realize from the assumption that $c\left(
u,w\right)  =0$ if and only if $u=w$ that $\mathcal{D}_{c}\left(
P,P_{n}\right)  =0$ if and only if $P=P_{n}$. We shall discuss, for instance,
how to choose $c\left(  \cdot\right)  $ to recover (\ref{DRO_I}) and the
corresponding logistic regression formulation of GR-Lasso.

\subsection{DRO Representation of GSRL Estimators\label{Sect_2_2_DRO}}

In this section, we will construct a cost function $c\left(  \cdot\right)  $
to obtain the GSRL (or GR-Lasso) estimators. We will follow an approach
introduced in \citeapos{blanchet2016robust} for the context of square-root Lasso
(SR-Lasso) and regularized logistic regression estimators.

\subsubsection{GSRL Estimators for Linear Regression\label{Sect_2_2_1_Lin}}

We start by assuming precisely the linear regression setup described in the
Introduction and leading to (\ref{DRO_I}). Given $\alpha=(\alpha
_{1},...,\alpha_{\bar{d}})^{T}\in R_{++}^{\bar{d}}$ define $\alpha
^{-1}=\left(  \alpha_{1}^{-1},...,\alpha_{\bar{d}}^{-1}\right)  ^{T}$. Now,
underlying there is a partition $G_{1},...,G_{\bar{d}}$ of $\{1,...,d\}$ and
given $q,t\in\lbrack1,\infty]$ we introduce the cost function%
\begin{equation}
c\left(  \left(  x,y\right)  ,\left(  x^{\prime},y^{\prime}\right)  \right)
=\left\{
\begin{array}
[c]{ccc}%
\left\Vert x-x^{\prime}\right\Vert _{\alpha^{-1}\text{-}(q,t)}^{\varrho} &
\text{if} & y=y^{\prime}\\
\infty & \text{if} & y\neq y^{\prime}%
\end{array}
\right.  , \label{Def_c_LR}%
\end{equation}
where, following (\ref{alpha_ps_norm}), we have that
\[
\left\Vert x-x^{\prime}\right\Vert _{\alpha^{-1}\text{-}(q,t)}^{\varrho
}=\left(  \sum_{i=1}^{\bar{d}}\alpha_{i}^{-t}\left\Vert x\left(  G_{i}\right)
-x^{\prime}\left(  G_{i}\right)  \right\Vert _{q}^{t}\right)  ^{\varrho/t}.
\]
Then, we obtain the following result.

\begin{theorem}
[DRO Representation for Linear Regression GSRL]%
\label{Thm-Dro-GLasso-Regression} Suppose that $q,t\in\lbrack1,\infty]$ and
$\alpha\in R_{++}^{\bar{d}}$ are given and $c\left(  \cdot\right)  $ is
defined as in (\ref{Def_c_LR}) for $\varrho=2$. Then, if $l\left(
x,y;\beta\right)  =\left(  y-x^{T}\beta\right)  ^{2}$ we obtain
\[
\quad\min_{\beta\in\mathbb{R}^{d}}\sup_{P:D_{c}\left(  P,P_{n}\right)
\leq\delta}\left(  \mathbb{E}_{P}\left[  l\left(  X,Y;\beta\right)  \right]
\right)  ^{1/2}=\min_{\beta\in\mathbb{R}^{d}} \left(  {E_{P_{n}}\left[
l\left(  X,Y;\beta\right)  \right]  }\right)  ^{1/2}+\sqrt{\delta}\left\Vert
\beta\right\Vert _{\alpha\text{-}(p,s)},
\]
where $1/p+1/q=1$, and $1/s+1/t=1$.
\end{theorem}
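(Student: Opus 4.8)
The plan is to fix $\beta$ and analyze only the inner maximization, deferring the outer $\min_\beta$ and the monotone square root to the very end. Since $t\mapsto t^{1/2}$ is increasing, it suffices to evaluate $\sup\{\E_P[l(X,Y;\beta)]:\mathcal{D}_c(P,P_n)\le\delta\}$ and take its square root afterwards. First I would invoke the strong-duality theorem for optimal-transport DRO (the Lagrangian/Kantorovich duality underlying the line of work this paper continues) to rewrite the constrained supremum over measures as
\[
\inf_{\gamma\ge0}\Big\{\gamma\delta+\E_{P_n}\big[\Phi_\gamma(X,Y)\big]\Big\},\qquad \Phi_\gamma(x',y')=\sup_{(x,y)}\big\{(y-x^T\beta)^2-\gamma\, c((x,y),(x',y'))\big\}.
\]
Because $c=\infty$ whenever $y\neq y'$, the inner supremum is forced to keep $y=y'$, so only the predictor coordinate is transported and $\Phi_\gamma(x',y')=\sup_{x}\{(y'-x^T\beta)^2-\gamma\|x-x'\|_{\alpha^{-1}\text{-}(q,t)}^2\}$.

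Next I would compute $\Phi_\gamma$ in closed form. Writing $\Delta=x-x'$ and letting $a=y'-x'^T\beta$ denote the in-sample residual, the problem becomes $\sup_\Delta\{(a-\Delta^T\beta)^2-\gamma\|\Delta\|_{\alpha^{-1}\text{-}(q,t)}^2\}$. The key structural input is the duality between $\|\cdot\|_{\alpha^{-1}\text{-}(q,t)}$ and $\|\cdot\|_{\alpha\text{-}(p,s)}$ (with $1/p+1/q=1$, $1/s+1/t=1$) supplied by Proposition \ref{Thm-Dual-Norm}: for each radius $r\ge0$ the quantity $\Delta^T\beta$ ranges over exactly $[-r\|\beta\|_{\alpha\text{-}(p,s)},\,r\|\beta\|_{\alpha\text{-}(p,s)}]$ as $\|\Delta\|_{\alpha^{-1}\text{-}(q,t)}=r$. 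Choosing the sign of $\Delta^T\beta$ opposite to $a$ reduces the problem to the one-dimensional program $\sup_{r\ge0}\{(|a|+r\,b)^2-\gamma r^2\}$ with $b:=\|\beta\|_{\alpha\text{-}(p,s)}$. A routine calculation shows this equals $+\infty$ for $\gamma\le b^2$ and $\dfrac{\gamma a^2}{\gamma-b^2}$ for $\gamma>b^2$, the latter attained at $r^\ast=|a|b/(\gamma-b^2)$.

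Substituting back and using $a^2=l(x',y';\beta)$, the dual problem collapses to $\inf_{\gamma>b^2}\{\gamma\delta+\frac{\gamma}{\gamma-b^2}L\}$, where $L:=\E_{P_n}[l(X,Y;\beta)]$ (the branch $\gamma\le b^2$ being infinite and hence inactive). Setting $\gamma=b^2+\eta$ and differentiating in $\eta>0$ gives $\eta^\ast=b\sqrt{L/\delta}$ and optimal value $(\sqrt L+b\sqrt\delta)^2=\big(\E_{P_n}^{1/2}[l]+\sqrt\delta\,\|\beta\|_{\alpha\text{-}(p,s)}\big)^2$. Taking the square root and then the outer minimum over $\beta$ yields exactly the right-hand side of the theorem. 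The degenerate case $b=0$ (i.e. $\beta=0$, where the loss is independent of $x$ so that no movement of mass changes it) is checked separately and matches.

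The step I expect to be the main obstacle is the rigorous justification of the strong duality together with attainment of the inner supremum: one must confirm that the Lagrangian dual has no gap for this cost, that the objective $(y-x^T\beta)^2-\gamma c$ is coercive in $x$ precisely when $\gamma>b^2$ so that $\sup_x$ is genuinely achieved and the dual-norm bound is tight, and that interchanging the pointwise supremum with the $\E_{P_n}$ expectation is valid. Handling the regime $\gamma\le b^2$, where $\Phi_\gamma\equiv+\infty$, also requires care to ensure it does not spuriously enter the infimum. Once the duality and the tightness afforded by Proposition \ref{Thm-Dual-Norm} are in place, the remainder is the two elementary one-dimensional optimizations carried out above.
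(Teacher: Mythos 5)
Your proposal is correct and follows essentially the same route as the paper's proof: strong duality for the transport-constrained supremum, reduction of the inner maximization to a one-dimensional quadratic in $r=\left\Vert \Delta\right\Vert _{\alpha^{-1}\text{-}(q,t)}$ via the H\"{o}lder tightness in Proposition \ref{Thm-Dual-Norm}, yielding $\gamma a^{2}/(\gamma-b^{2})$ for $\gamma>b^{2}$ and $+\infty$ otherwise, followed by the convex minimization over $\gamma$ giving $(\sqrt{L}+b\sqrt{\delta})^{2}$. If anything, your write-up is slightly more careful than the paper's (the dual is stated with the correct sign, and the degenerate case $\beta=0$ is noted explicitly), and the technical caveats you flag about strong duality and attainment are exactly the points the paper delegates to the cited reference.
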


We remark that choosing $p=q=2$, $t=\infty$, $s=1$, and $\alpha_{i}%
=\sqrt{g_{i}}$ for $i\in\{1,...,\bar{d}\}$ we end up obtaining the GSRL
estimator formulated in \citeapos{bunea2014group}). \newline We note that the cost
function $c\left(  \cdot\right)  $ only allows mass transportation on the
predictors (i.e $X$), but no mass transportation is allowed on the response
variable $Y$. This implies that the GSRL estimator implicitly assumes that
distributional uncertainty is only present on prediction variables (i.e.
variations on the data only occurs through the predictors).

\subsubsection{GR-Lasso Estimators for Logistic
Regression\label{Sect_2_2_2_Log}}

We now discuss GR-Lasso for classification problems. We consider a training
data set of the form $\{(X_{1},Y_{1}),\ldots,(X_{n},Y_{n})\}$. Once again, the
input $X_{i}\in\mathbb{R}^{d}$ is a vector of $d$ predictor variables, but now
the response variable $Y_{i}\in\{-1,1\}$ is a categorical variable. In this
section we shall consider as our loss function the log-exponential function,
namely,
\begin{equation}
l\left(  x,y;\beta\right)  =\log\left(  1+\exp\left(  -y\beta^{T}x\right)
\right)  . \label{E_log_Expo_loss}%
\end{equation}
This loss function is motivated by a logistic regression model which we shall
review in the sequel. But for the DRO representation formulation it is not
necessary to impose any statistical assumption. We then obtain the following theorem.

\begin{theorem}
[DRO Representation for Logistic Regression GR-Lasso]%
\label{Thm-Dro-GLasso-Classification} Suppose that $q,t\in\lbrack1,\infty]$
and $\alpha\in R_{++}^{\bar{d}}$ are given and $c\left(  \cdot\right)  $ is
defined as in (\ref{Def_c_LR}) for $\varrho=1$. Then, if $l\left(
x,y;\beta\right)  $ is defined as in (\ref{E_log_Expo_loss}) we obtain%
\[
\min_{\beta\in\mathbb{R}^{d}}\sup_{P:D_{c}\left(  P,P_{n}\right)  \leq\delta
}\mathbb{E}_{P}\left[  l\left(  X,Y;\beta\right)  \right]  =\min_{\beta
\in\mathbb{R}^{d}}E_{P_{n}}\left(  l\left(  X,Y;\beta\right)  \right)
+\delta\left\Vert \beta\right\Vert _{\alpha\text{-}(p,s)},
\]
where $1\leq q,t\leq\infty$, $1/p+1/q=1$ and $1/s+1/t=1$.
\end{theorem}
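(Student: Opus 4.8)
The plan is to fix $\beta$ and attack the inner supremum $\sup_{P:D_c(P,P_n)\le\delta}\E_P[l(X,Y;\beta)]$ by strong duality for optimal-transport-constrained worst-case expectations, exactly as in the proof of Theorem \ref{Thm-Dro-GLasso-Regression}; the only structural differences are that here $\varrho=1$ and the loss is the $1$-Lipschitz, asymptotically linear log-exponential function. The duality I would invoke (the same Lagrangian/Kantorovich-dual result used in the linear case, cf. \citeapos{blanchet2016robust}) converts the constrained supremum over measures into a one-dimensional dual over a multiplier $\gamma\ge0$:
$$\sup_{P:D_c(P,P_n)\le\delta}\E_P[l(X,Y;\beta)]=\inf_{\gamma\ge0}\left\{\gamma\delta+\E_{P_n}\Big[\sup_{(x,y)}\big\{l(x,y;\beta)-\gamma\, c((x,y),(X,Y))\big\}\Big]\right\}.$$
Verifying the hypotheses of this duality (lower semicontinuity of $c$, measurability/upper semicontinuity of $l$, finiteness of the value) is the first thing I would check.

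Next I would exploit the block structure of the cost. Since $c((x,y),(x',y'))=\infty$ whenever $y\ne y'$, the inner supremum over $(x,y)$ collapses to a supremum over $x$ with $y$ held at the observed $Y$, namely $\sup_x\{\,l(x,Y;\beta)-\gamma\|x-X\|_{\alpha^{-1}\text{-}(q,t)}\,\}$. Writing $\Delta=x-X$ and noting that $l(x,Y;\beta)=\log(1+\exp(-Y\beta^T X-Y\beta^T\Delta))$ depends on $\Delta$ only through the scalar linear functional $-Y\beta^T\Delta$, I would optimize over the radius $r=\|\Delta\|_{\alpha^{-1}\text{-}(q,t)}$ and, at fixed $r$, over direction. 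The directional maximum of $-Y\beta^T\Delta$ subject to $\|\Delta\|_{\alpha^{-1}\text{-}(q,t)}\le r$ equals $r\|\beta\|_{\alpha\text{-}(p,s)}$ (using $|Y|=1$), where the dual norm of $\|\cdot\|_{\alpha^{-1}\text{-}(q,t)}$ is $\|\cdot\|_{\alpha\text{-}(p,s)}$ with $1/p+1/q=1$, $1/s+1/t=1$; this is precisely Proposition \ref{Thm-Dual-Norm}, which I would cite.

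The crux is then the scalar problem $\sup_{r\ge0}\{f(c_0+r\,a)-\gamma r\}$, where $f(z)=\log(1+e^z)$, $a=\|\beta\|_{\alpha\text{-}(p,s)}$, and $c_0=-Y\beta^T X$. Because $f'\in(0,1)$ and $f(z)/z\to1$ as $z\to\infty$, the derivative $a f'(c_0+ra)-\gamma$ is strictly negative for all $r$ when $\gamma\ge a$ (so the sup is attained at $r=0$ and equals $f(c_0)=l(X,Y;\beta)$), while the expression grows linearly to $+\infty$ when $\gamma<a$. Hence the inner sup equals $l(X,Y;\beta)$ if $\gamma\ge\|\beta\|_{\alpha\text{-}(p,s)}$ and $+\infty$ otherwise. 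Substituting back, the dual objective is $\gamma\delta+\E_{P_n}[l(X,Y;\beta)]$ on the feasible range $\gamma\ge\|\beta\|_{\alpha\text{-}(p,s)}$ and $+\infty$ below it; being increasing in $\gamma$, its infimum is attained at $\gamma=\|\beta\|_{\alpha\text{-}(p,s)}$, yielding $\E_{P_n}[l(X,Y;\beta)]+\delta\|\beta\|_{\alpha\text{-}(p,s)}$, after which taking $\min_\beta$ completes the identity.

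I expect the main obstacle to be the rigorous justification of the strong-duality step and of interchanging the supremum with the expectation, i.e. reducing the worst-case measure to a per-sample perturbation of the predictors; the scalar optimization and the dual-norm identification are routine once the Lipschitz-and-asymptotically-slope-one behavior of $f$ is recorded, this being exactly the feature that pins the threshold at $\gamma=\|\beta\|_{\alpha\text{-}(p,s)}$ (in contrast to the quadratic loss and $\varrho=2$ of Theorem \ref{Thm-Dro-GLasso-Regression}, where the $\sqrt{\delta}$ scaling arises instead).
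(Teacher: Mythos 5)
Your proposal is correct and follows essentially the same route as the paper's proof: strong duality reduces the worst-case expectation to a one-dimensional dual over $\gamma$, the infinite cost for $y\neq y'$ confines perturbations to the predictors, and the per-sample inner supremum equals $l(X_i,Y_i;\beta)$ for $\gamma\geq\Vert\beta\Vert_{\alpha\text{-}(p,s)}$ and $+\infty$ otherwise, after which monotonicity in $\gamma$ pins the optimum at $\gamma=\Vert\beta\Vert_{\alpha\text{-}(p,s)}$. The only cosmetic difference is that you carry out the radius-and-direction scalar optimization of $r\mapsto\log(1+e^{c_0+ra})-\gamma r$ explicitly via the dual norm of Proposition \ref{Thm-Dual-Norm}, whereas the paper outsources that step to Lemma 1 of \citeapos{shafieezadeh-abadeh_distributionally_2015}.
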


We note that by taking $p=q=2$, $t=\infty$, $s=1$, $\alpha_{i}=\sqrt{g_{i}}$
for $i\in\{1,...,\bar{d}\}$, and $\lambda=\delta$ we recover the GR-Lasso
logistic regression estimator from \citeapos{meier2008group}.

As discussed in the previous subsection, the choice of $c\left(  \cdot\right)
$ implies that the GR-Lasso estimator implicitly assumes that distributionally
uncertainty is only present on prediction variables.

\section{Optimal Choice of Regularization Parameter\label{Sec-Asymptotic}}

Let us now discuss the mathematical formulation of the optimal criterion that
we discussed for choosing $\delta$ (and therefore the regularization parameter
$\lambda$). We define
\[
\Lambda_{\delta}\left(  P_{n}\right)  =\{\beta^{P}:P\in\mathcal{U}_{\delta
}\left(  P_{n}\right)  \},
\]
as discussed in the Introduction, $\Lambda_{\delta}\left(  P_{n}\right)  $ is
a natural confidence region for $\beta^{\ast}$ because each element $P$ in the
distributional uncertainty set $\mathcal{U}_{\delta}\left(  P_{n}\right)  $
can be interpreted as a plausible variation of the empirical data $P_{n}$.
Then, given a confidence level $1-\chi$ (say $1-\chi=.95$) we wish to choose
\[
\delta_{n}^{\ast}=\inf\{\delta:P\left(  \beta^{\ast}\in\Lambda_{\delta}\left(
P_{n}\right)  \right)  >1-\chi\}.
\]
Note that in the evaluation of $P\left(  \beta^{\ast}\in\Lambda_{\delta
}\left(  P_{n}\right)  \right)  $ the random element is $P_{n}$. So, we shall
impose natural probabilistic assumptions on the data generating process in
order to \textit{asymptotically} \textit{evaluate} $\delta_{n}^{\ast}$ as
$n\rightarrow\infty$.

\subsection{The Robust Wasserstein Profile Function\label{Sect_2_3_1_RWP}}

In order to asymptotically evaluate $\delta_{n}^{\ast}$ we must recall basic
properties of the so-called Robust Wassertein Profile function (RWP function)
introduced in \citeapos{blanchet2016robust}. \newline Suppose for each $\left(
x,y\right)  $, the loss function $l\left(  x,y;\cdot\right)  $ is convex and
differentiable, then under natural moment assumptions which guarantee that
expectations are well defined, we have that for
\[
P\in\mathcal{U}_{\delta}\left(  P_{n}\right)  =\{P:D_{c}\left(  P,P_{n}%
\right)  \leq\delta\},
\]
the parameter $\beta^{P}$ must satisfy
\begin{equation}
\mathbb{E}_{P}\left[  \nabla_{\beta}l\left(  X,Y;\beta^{P}\right)  \right]
=0. \label{Est_Eq}%
\end{equation}
Now, for any given $\beta$, let us define
\[
\mathcal{M}\left(  \beta\right)  =\left\{  P:\mathbb{E}_{P}\left[
\nabla_{\beta}l\left(  X,Y;\beta\right)  \right]  =0\right\}  ,
\]
which is the set of probability measures $P$, under which $\beta$ is the
optimal risk minimization parameter. We would like to choose $\delta$ as small
as possible so that
\begin{equation}
\mathcal{U}_{\delta}\left(  P_{n}\right)  \cap\mathcal{M}\left(  \beta^{\ast
}\right)  \neq\varnothing\label{Int_Non_Empty}%
\end{equation}
with probability at least $1-\chi$. But note that (\ref{Int_Non_Empty}) holds
if and only if there exists $P$ such that $D_{c}\left(  P,P_{n}\right)
\leq\delta$ and $\mathbb{E}_{P}\left[  \nabla_{\beta}l\left(  X,Y;\beta^{\ast
}\right)  \right]  =0$. \newline The RWP function is defined
\begin{equation}
R_{n}\left(  \beta\right)  =\min\{D_{c}\left(  P,P_{n}\right)  :\mathbb{E}%
_{P}\left[  \nabla_{\beta}l\left(  X,Y;\beta\right)  \right]  =0\}.
\label{RWP_Function}%
\end{equation}
In view of our discussion following (\ref{Int_Non_Empty}), it is immediate
that $\beta^{\ast}\in\Lambda_{\delta}\left(  P_{n}\right)  $ if and only if
$R_{n}\left(  \beta^{\ast}\right)  \leq\delta$, which then implies that
\[
\delta_{n}^{\ast}=\inf\{\delta:P\left(  R_{n}\left(  \beta^{\ast}\right)
\leq\delta\right)  >1-\chi\}.
\]
Consequently, we conclude that $\delta_{n}^{\ast}$ can be evaluated
asymptotically in terms of the $1-\chi$ quantile of $R_{n}\left(  \beta^{\ast
}\right)  $ and therefore we must identify the asymptotic distribution of
$R_{n}\left(  \beta^{\ast}\right)  $ as $n\rightarrow\infty$. We illustrate
intuitively the role of the RWP function and $\mathcal{M}\left(  \beta\right)
$ in Figure 1, where RWP function $R_{n}\left(  \beta^{\ast}\right)  $ could
be interpreted as the discrepancy distance between empirical measure $P_{n}$
and the manifold $\mathcal{M}\left(  \beta^{\ast}\right)  $ associated with
$\beta^{\ast}$. \begin{figure}[pth]
\begin{center}
\includegraphics[width=0.8\textwidth]{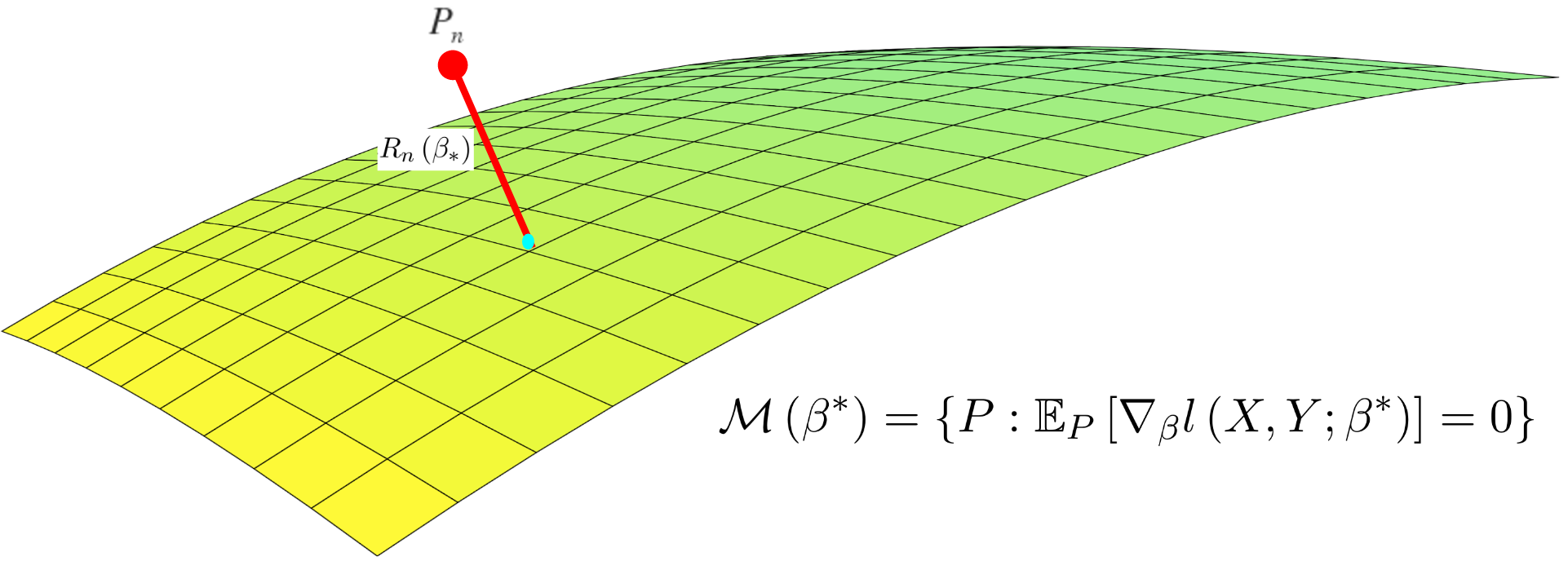}
\end{center}
\caption{Intuitive Plot for the RWP function $R_{n}\left(  \beta\right)  $ and
the set $\mathcal{M}\left(  \beta\right)  $.}%
\label{fig:spiral}%
\end{figure}\newline Typically, under assumptions supporting the underlying
model (as in the generalized linear setting we considered), we will have that
$\beta^{\ast}$ is characterized by the estimating equation (\ref{Est_Eq}).
Therefore, under natural statistical assumptions one should expect that
$R_{n}\left(  \beta^{\ast}\right)  \rightarrow0$ as $n\rightarrow\infty$ at a
certain rate and therefore $\delta_{n}^{\ast}\rightarrow0$ at a certain
(optimal) rate. This then yields an optimal rate of convergence to zero for
the underlying regularization parameter. The next subsections will investigate
the precise rate of convergence analysis of $\delta_{n}^{\ast}$.

\subsection{Optimal Regularization for GSRL\ Linear
Regression\label{Sect_3_3_RWP_Lin}}

We assume, for simplicity, that the training data set $\{(X_{1},Y_{1}%
),\ldots,(X_{n},Y_{n})\}$ is i.i.d. and that the linear relationship
$Y_{i}=\beta^{\ast\;T}X_{i}+e_{i}$, holds with the errors $\left\{
e_{1},...,e_{n}\right\}  $ being i.i.d. and independent of $\{X_{1}%
,\ldots,X_{n}\}$. Moreover, we assume that both the entries of $X_{i}$ and the
errors have finite second moment and the errors have zero mean. \newline
Since in our current setting $l\left(  x,y;\beta\right)  =\left(  y-x^{T}%
\beta\right)  ^{2}$, then the RWP function (\ref{RWP_Function})\ for linear
regression model is given as,
\begin{equation}
R_{n}\left(  \beta\right)  =\min_{P}\left\{  D_{c}\left(  P,P_{n}\right)
:\mathbb{E}_{P}\left[  X\left(  Y-X^{T}\beta\right)  \right]  =0\right\}  .
\label{Eqn-RWP-Linear}%
\end{equation}

\begin{theorem}
[RWP Function Asymptotic Results: Linear Regression]%
\label{Thm-RWPI-Asymptotic_Lin}Under the assumptions imposed in this
subsection and the cost function as given in \eqref{Def_c_LR}, with
$\varrho=2$,
\[
nR_{n}\left(  \beta^{\ast}\right)  \Rightarrow L_{1}:=\max_{\zeta\in
\mathbb{R}^{d}}\left\{  2\sigma\zeta^{T}Z-\mathbb{E}\left[  \left\Vert
e\zeta-\left(  \zeta^{T}X\right)  \beta^{\ast}\right\Vert _{\alpha
\text{-}(p,s)}^{2}\right]  \right\}  ,
\]
as $n\rightarrow\infty$, where $\Rightarrow$ means convergence in distribution
and $Z\sim\mathcal{N}\left(  0,\Sigma\right)  $ with $\Sigma=Var(X)$.
Moreover, we can observe the more tractable stochastic upper bound,
\[
L_{1}\overset{D}{\leq}L_{2}:=\frac{\mathbb{E}\left[  e^{2}\right]
}{\mathbb{E}\left[  e^{2}\right]  -\left(  \mathbb{E}\left[  \left\vert
e\right\vert \right]  \right)  ^{2}}\left\Vert Z\right\Vert _{\alpha
^{-1}\text{-}(q,t)}^{2}.
\]

\end{theorem}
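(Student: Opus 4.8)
The plan is to turn the infinite-dimensional problem \eqref{Eqn-RWP-Linear} into a finite-dimensional concave maximization via optimal-transport strong duality, and then to carry out a $1/\sqrt{n}$ expansion. First I would dualize the moment constraint $\mathbb{E}_P[X(Y-X^T\beta^*)]=0$ with a multiplier $\lambda\in\mathbb{R}^d$ and invoke Kantorovich duality for \eqref{LP}; since the $W$-marginal of the coupling is pinned to $P_n$ and $c$ forbids moving the response coordinate, each atom $(X_i,Y_i)$ may only be displaced to $(X_i+\Delta_i,Y_i)$, so the inner problem decouples across the $n$ atoms and yields
\begin{equation}
R_n(\beta^*)=\sup_{\lambda\in\mathbb{R}^d}\frac{1}{n}\sum_{i=1}^n\inf_{\Delta_i\in\mathbb{R}^d}\Big\{\|\Delta_i\|_{\alpha^{-1}\text{-}(q,t)}^2-\lambda^T(X_i+\Delta_i)\big(e_i-\Delta_i^T\beta^*\big)\Big\},
\label{plan-dual}
\end{equation}
where I used $Y_i-X_i^T\beta^*=e_i$. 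Establishing \eqref{plan-dual} rigorously (strong duality, attainment, and a measurable selection of the optimal coupling) is standard and I would invoke the RWP duality of \citeapos{blanchet2016robust}.

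Next I would fix the scaling. Since the empirical estimating sum $n^{-1}\sum_iX_ie_i$ is $O_p(n^{-1/2})$, I set $\lambda=n^{-1/2}\zeta$ and $\Delta_i=n^{-1/2}\delta_i$ in \eqref{plan-dual} and multiply through by $n$. Expanding the bilinear term, the $O(n^{-1/2})$ contribution of $-\lambda^TX_ie_i$ sums (after the $n$-scaling) to $-\zeta^T\big(n^{-1/2}\sum_iX_ie_i\big)$, the $\delta_i$-dependent part at leading order is $\|\delta_i\|_{\alpha^{-1}\text{-}(q,t)}^2+\delta_i^T\big((\zeta^TX_i)\beta^*-e_i\zeta\big)$, and every remaining term is $O(n^{-1/2})$. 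The inner infimum is the elementary one $\inf_\delta\{\|\delta\|^2+\delta^Tv\}=-\tfrac14\|v\|_*^2$, and by the dual-norm identity of Proposition \ref{Thm-Dual-Norm} the dual of $\|\cdot\|_{\alpha^{-1}\text{-}(q,t)}$ is $\|\cdot\|_{\alpha\text{-}(p,s)}$ with $1/p+1/q=1,\ 1/s+1/t=1$. Hence $\tfrac1n\sum_i$ of the inner optima converges, by the law of large numbers, to $-\tfrac14\mathbb{E}\big[\|(\zeta^TX)\beta^*-e\zeta\|_{\alpha\text{-}(p,s)}^2\big]$, while the central limit theorem gives $n^{-1/2}\sum_iX_ie_i\Rightarrow\sigma Z$ with $\sigma^2=\mathbb{E}[e^2]$ and $Z\sim\mathcal{N}(0,\Sigma)$. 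The limiting objective is therefore $-\sigma\zeta^TZ-\tfrac14\mathbb{E}[\|(\zeta^TX)\beta^*-e\zeta\|_{\alpha\text{-}(p,s)}^2]$, and the substitution $\zeta\mapsto-2\zeta$ (admissible because the supremum ranges over all of $\mathbb{R}^d$) converts it into the stated functional $L_1$.

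The main obstacle is interchanging the outer supremum with the limit $n\to\infty$: \eqref{plan-dual} is a supremum of random concave functions of $\zeta$, and I must upgrade pointwise-in-$\zeta$ convergence to convergence of the maximizers and of the maximal value. I would do this with a convexity (epi-convergence) argument: the limiting concave function is coercive because $\mathbb{E}[\|e\zeta-(\zeta^TX)\beta^*\|_{\alpha\text{-}(p,s)}^2]\ge \mathrm{Var}(|e|)\,\|\zeta\|_{\alpha\text{-}(p,s)}^2>0$ for $\zeta\neq0$, so with high probability the optimizers lie in a fixed compact set; pointwise convergence of finite concave functions is then automatically uniform on compacts, allowing the argmax and value to pass to the limit. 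The accompanying nuisance is showing the discarded $O(n^{-1/2})$ remainder, in particular the cubic term $n^{-3/2}\zeta^T\delta_i\delta_i^T\beta^*$, is negligible uniformly over the relevant compact set of $(\zeta,\{\delta_i\})$; finiteness of the second moments of $X$ and $e$ suffices.

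Finally, for the stochastic upper bound I would bound the quadratic form in $L_1$ from below. Conditioning on $X$ and writing $w=(\zeta^TX)\beta^*$, the reverse triangle inequality gives $\|e\zeta-w\|_{\alpha\text{-}(p,s)}\ge\big|\,|e|\,\|\zeta\|_{\alpha\text{-}(p,s)}-\|w\|_{\alpha\text{-}(p,s)}\big|$, so taking $\mathbb{E}_e[\cdot]$ and minimizing the resulting quadratic in $\|w\|_{\alpha\text{-}(p,s)}$ over $[0,\infty)$ yields
\begin{equation}
\mathbb{E}\big[\|e\zeta-(\zeta^TX)\beta^*\|_{\alpha\text{-}(p,s)}^2\big]\ \ge\ \big(\mathbb{E}[e^2]-(\mathbb{E}|e|)^2\big)\,\|\zeta\|_{\alpha\text{-}(p,s)}^2 .
\label{plan-lb}
\end{equation}
Substituting \eqref{plan-lb} into $L_1$ and using the scalar identity $\max_\zeta\{2a^T\zeta-C\|\zeta\|^2\}=C^{-1}\|a\|_*^2$ together with Proposition \ref{Thm-Dual-Norm} gives, for every realization of $Z$, the pathwise bound $L_1\le \frac{\mathbb{E}[e^2]}{\mathbb{E}[e^2]-(\mathbb{E}|e|)^2}\,\|Z\|_{\alpha^{-1}\text{-}(q,t)}^2=L_2$, which is the asserted stochastic domination.
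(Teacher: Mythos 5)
Your proposal is correct and follows essentially the same route as the paper's proof: strong duality to reduce the RWP function to a finite-dimensional supremum over the multiplier, the $n^{-1/2}$ scaling with the inner infimum evaluated through the dual-norm identity $\inf_{\delta}\{\|\delta\|_{\alpha^{-1}\text{-}(q,t)}^{2}+\delta^{T}v\}=-\tfrac14\|v\|_{\alpha\text{-}(p,s)}^{2}$, a compactification plus uniform law of large numbers argument to pass the supremum to the limit, and the identical reduction of the stochastic bound to $\sigma^{2}\|Z\|_{\alpha^{-1}\text{-}(q,t)}^{2}$ divided by $\inf_{\|\zeta\|_{\alpha\text{-}(p,s)}=1}\mathbb{E}\big[\|e\zeta-(\zeta^{T}X)\beta^{\ast}\|_{\alpha\text{-}(p,s)}^{2}\big]\geq \mathrm{Var}(|e|)$. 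The only cosmetic differences are that you make the $\zeta\mapsto-2\zeta$ reparametrization and the $1/4$ factor explicit, and you obtain the $\mathrm{Var}(|e|)$ lower bound by minimizing pointwise in $X$ rather than via Jensen's inequality; both are sound.
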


\bigskip

\textbf{We now explain how to use Theorem \ref{Thm-RWPI-Asymptotic_Lin} to set
the regularization parameter in GSRL linear regression}:

\begin{enumerate}
\setlength\itemsep{.3em}

\item Estimate the $1-\chi$ quantile of $\left\Vert Z\right\Vert _{\alpha
^{-1}\text{-}(q,t)}^{2}$. We use use $\hat{\eta}_{1-\chi}$ to denote the
estimator for this quantile. This step involves estimating $\Sigma$ from the
training data.

\item The regularization parameter $\lambda$ in the GSRL linear regression
takes the form
\[
\lambda=\sqrt{\delta}=\hat{\eta}_{1-\chi}^{1/2}\left(  n(1-\left(
\mathbb{E}\left\vert e\right\vert \right)  ^{2}/\mathbb{E}e^{2})\right)
^{-1/2}.
\]
Note that the denominator in the previous expression must be estimated from
the training data.
\end{enumerate}
\bigskip

Note that the regularization parameter for GSRL for linear regression chosen
via our RWPI asymptotic result does not depends on the magnitude of error $e$
(see also the discussion in \citeapos{bunea2014group}). \newline It is also possible to formulate the
optimal regularization results for high-dimension setting, where the number of
predictors growth with sample size. We discuss 
the results in the Appendix namely Section \ref{Sec-Supply-Highdim}.

\subsection{Optimal Regularization for GR-Lasso\ Logistic
Regression\label{Sect_3_4_Log}}

We assume that the training data set $\{(X_{1},Y_{1}),\ldots,(X_{n},Y_{n})\}$
is i.i.d.. In addition, we assume that the $X_{i}$'s have a finite second
moment and also that they possess a density with respect to the Lebesgue
measure. Moreover, we assume a logistic regression model; namely,
\begin{equation}
P\left(  Y_{i}=1|X_{i}\right)  =1/\left(  1+\exp\left(  -X_{i}^{T}\beta^{\ast
}\right)  \right)  , \label{Eqn-Logistic-Model}%
\end{equation}
and $P\left(  Y_{i}=-1|X_{i}\right)  =1-P\left(  Y_{i}=1|X_{i}\right)  $.
\newline In the logistic regression setting, we consider the log-exponential
loss defined in \eqref{E_log_Expo_loss}. Therefore, the RWP function,
(\ref{RWP_Function}),\ for logistic regression is
\begin{equation}
R_{n}\left(  \beta\right)  =\min\left\{  D_{c}\left(  P,P_{n}\right)
:\mathbb{E}_{P}\left[  \frac{YX}{1+\exp\left(  YX^{T}\beta\right)  }\right]
=0\right\}  . \label{Eqn-RWP-Logistic}%
\end{equation}

\begin{theorem}
[RWP Function Asymptotic Results: Logistic Regression]%
\label{Thm-RWPI-Asymptotic} Under the assumptions imposed in this subsection
and the cost function as given in \eqref{Def_c_LR} with $\varrho=1$,
\[
\sqrt{n}R_{n}\left(  \beta^{\ast}\right)  \Rightarrow L_{3}:=\sup_{\zeta\in
A}\quad\zeta^{T}Z,
\]
as $n\rightarrow\infty$, where
\[
Z\sim\mathcal{N}\left(  0,\mathbb{E}\left[  \frac{XX^{T}}{\left(
1+\exp\left(  YX^{T}\beta^{\ast}\right)  \right)  ^{2}}\right]  \right)
\]
and
\[
A=\left\{  \zeta\in\mathbb{R}^{d}:\mathrm{ess}\sup_{X,Y}\left\Vert \zeta
^{T}\frac{y\left(  1+\exp\left(  YX^{T}\beta^{\ast}\right)  \right)
I_{d\times d}-XX^{T}}{\left(  1+\exp\left(  YX^{T}\beta^{\ast}\right)
\right)  ^{2}}\right\Vert _{\alpha\text{-}(p,s)}\leq1\right\}  .
\]
Further, the limit law $L_{3}$ follows the simpler stochastic bound,
\[
L_{3}\overset{D}{\leq}L_{4}:=\left\Vert \tilde{Z}\right\Vert _{\alpha
^{-1}\text{-}(q,t)},
\]
where $\tilde{Z}\sim\mathcal{N}\left(  0,\Sigma\right)  $.
\end{theorem}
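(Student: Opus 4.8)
The plan is to mirror the route used for the linear analogue, Theorem~\ref{Thm-RWPI-Asymptotic_Lin}, adapting it to the cost exponent $\varrho=1$. Throughout write $h(x,y;\beta)=yx/(1+\exp(yx^{T}\beta))$ for the (negative) score, so that the estimating equation in \eqref{Eqn-RWP-Logistic} reads $\mathbb{E}_{P}[h(X,Y;\beta)]=0$, and note that the logistic model \eqref{Eqn-Logistic-Model} yields the population identity $\mathbb{E}[h(X,Y;\beta^{\ast})]=0$. The enabling step is to dualize the transport problem defining $R_{n}(\beta^{\ast})$. Using the strong-duality theory for optimal-transport-constrained programs developed in \citeapos{blanchet2016robust} (applicable because $c$ is lower semicontinuous, vanishes only on the diagonal, and forbids moving the label $y$), I would rewrite
\[
R_{n}(\beta^{\ast})=\sup_{\lambda\in\mathbb{R}^{d}}\frac{1}{n}\sum_{i=1}^{n}\inf_{u\in\mathbb{R}^{d}}\Big(\left\Vert u-X_{i}\right\Vert_{\alpha^{-1}\text{-}(q,t)}-\lambda^{T}h(u,Y_{i};\beta^{\ast})\Big),
\]
which reduces the infinite-dimensional transport program to a concave maximization over the single multiplier $\lambda$.

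Second, I would install the correct scaling and pass to the limit formally. Since $\mathbb{E}[h(X,Y;\beta^{\ast})]=0$, the empirical mean $\bar{h}_{n}:=n^{-1}\sum_{i}h(X_{i},Y_{i};\beta^{\ast})$ is $O_{p}(n^{-1/2})$ with $\sqrt{n}\,\bar{h}_{n}\Rightarrow Z$, and using $Y^{2}=1$ its covariance is $\mathbb{E}[hh^{T}]=\mathbb{E}[XX^{T}/(1+\exp(YX^{T}\beta^{\ast}))^{2}]$, exactly the stated law of $Z$. This forces the optimal multiplier to be of order $\sqrt{n}$, so I substitute $\lambda=\sqrt{n}\,\zeta$ and $u=X_{i}+\Delta/\sqrt{n}$ and Taylor-expand $h(u,Y_{i};\beta^{\ast})=h(X_{i},Y_{i};\beta^{\ast})+n^{-1/2}D_{x}h(X_{i},Y_{i};\beta^{\ast})\Delta+o(n^{-1/2})$. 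The term linear in $\bar{h}_{n}$ contributes $-\zeta^{T}(\sqrt{n}\,\bar{h}_{n})\Rightarrow-\zeta^{T}Z$, while for $\varrho=1$ the residual inner infimum $\inf_{\Delta}(\left\Vert\Delta\right\Vert_{\alpha^{-1}\text{-}(q,t)}-(D_{x}h_{i}^{T}\zeta)^{T}\Delta)$ equals $0$ when $\left\Vert D_{x}h_{i}^{T}\zeta\right\Vert_{\alpha\text{-}(p,s)}\le 1$ and $-\infty$ otherwise, by the dual-norm identity of Proposition~\ref{Thm-Dual-Norm}; this is precisely where $\varrho=1$ produces a hard constraint rather than the quadratic penalty seen in Theorem~\ref{Thm-RWPI-Asymptotic_Lin}. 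Averaging the per-sample constraints and letting $n\to\infty$ turns them into the essential-supremum constraint defining $A$ (one checks that $D_{x}h(x,y;\beta^{\ast})$ is the matrix inside the norm there), and the symmetry of $A$ and of $Z$ identifies $\sup_{\zeta\in A}(-\zeta^{T}Z)$ with $\sup_{\zeta\in A}\zeta^{T}Z=L_{3}$.

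Third, and this is where the real work lies, I would make this limit rigorous. The delicate points are the interchange of $\sup_{\lambda}$ with $n\to\infty$ and the uniform control of the Taylor remainder. Concretely I would show the dual optimizers $\zeta_{n}$ are tight via a two-sided estimate (a feasible primal perturbation gives a lower bound on $R_{n}$ that keeps $\zeta_{n}$ away from $0$, while the constraint set is bounded because $D_{x}h(x,y;\beta^{\ast})^{T}\zeta\to y\zeta$ as $yx^{T}\beta^{\ast}\to-\infty$), then invoke epi/uniform convergence of the concave dual objectives on a compact set, using the finite-second-moment hypothesis for the central-limit term and a uniform law of large numbers so that the averaged constraint converges to the $\mathrm{ess\,sup}$ over $(X,Y)$. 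The Lebesgue-density assumption on $X$ guarantees that this essential supremum is genuinely over the conditional support, so no constraint is lost in the limit; I expect this uniformity to be the main obstacle.

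Finally, for the stochastic upper bound I would relax $A$ to a norm ball. Along directions with $yX^{T}\beta^{\ast}\to-\infty$ — which occur with positive probability precisely because $X$ admits a density — one has $D_{x}h(X,Y;\beta^{\ast})^{T}\zeta\to y\zeta$, so the defining constraint of $A$ forces $\left\Vert\zeta\right\Vert_{\alpha\text{-}(p,s)}\le 1$; hence $A\subseteq\{\zeta:\left\Vert\zeta\right\Vert_{\alpha\text{-}(p,s)}\le1\}$ and, by the dual-norm identity of Proposition~\ref{Thm-Dual-Norm}, $L_{3}=\sup_{\zeta\in A}\zeta^{T}Z\le\sup_{\left\Vert\zeta\right\Vert_{\alpha\text{-}(p,s)}\le1}\zeta^{T}Z=\left\Vert Z\right\Vert_{\alpha^{-1}\text{-}(q,t)}$. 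The remaining passage to $L_{4}=\left\Vert\tilde{Z}\right\Vert_{\alpha^{-1}\text{-}(q,t)}$ is a covariance-comparison step: provided the covariance of $Z$ is dominated by $\Sigma$, expressing $\tilde{Z}$ as $Z$ convolved with an independent centered Gaussian and invoking Anderson's inequality gives the stochastic domination $\left\Vert Z\right\Vert_{\alpha^{-1}\text{-}(q,t)}\overset{D}{\le}\left\Vert\tilde{Z}\right\Vert_{\alpha^{-1}\text{-}(q,t)}$, completing the bound.
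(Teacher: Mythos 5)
Your overall route is the same as the paper's: dualize the transport-constrained problem, argue the dual variable can be confined to a compact set, apply a uniform law of large numbers, and observe that for $\varrho=1$ the positively homogeneous inner problem produces the hard constraint set $A$ via the dual-norm identity of Proposition \ref{Thm-Dual-Norm}; the relaxation $A\subseteq\{\zeta:\left\Vert \zeta\right\Vert _{\alpha\text{-}(p,s)}\leq1\}$ and the covariance comparison for $L_{3}\overset{D}{\leq}L_{4}$ also match the paper's argument (your appeal to Anderson's inequality after writing $\tilde{Z}=Z+W$ is in fact a cleaner justification of the last step than the paper's bare assertion).

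There is, however, one concrete error in your second step: the scaling $\lambda=\sqrt{n}\,\zeta$, $u=X_{i}+\Delta/\sqrt{n}$ is the one appropriate to the quadratic cost ($\varrho=2$) of Theorem \ref{Thm-RWPI-Asymptotic_Lin}, not to $\varrho=1$. With your substitution the inner infimum reads
\begin{equation*}
\inf_{\Delta}\Bigl(n^{-1/2}\left\Vert \Delta\right\Vert _{\alpha^{-1}\text{-}(q,t)}-\sqrt{n}\,\zeta^{T}h(X_{i},Y_{i};\beta^{\ast})-\zeta^{T}D_{x}h(X_{i},Y_{i};\beta^{\ast})\Delta+\cdots\Bigr),
\end{equation*}
in which the linear term in $\Delta$ is $O(1)$ while the transport cost is $O(n^{-1/2})$; the infimum is therefore $-\infty$ for every $\zeta$ with $\zeta^{T}D_{x}h\neq0$, and the claim that ``the optimal multiplier is of order $\sqrt{n}$'' is false here. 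For $\varrho=1$ the cost is $1$-homogeneous, so finiteness of $\sup_{u}\bigl(\lambda^{T}h(u,Y_{i};\beta^{\ast})-\left\Vert u-X_{i}\right\Vert _{\alpha^{-1}\text{-}(q,t)}\bigr)$ already forces $\lambda$ to lie in the $O(1)$ Lipschitz set that becomes $A$; the displacement $u-X_{i}$ is not infinitesimal, and the $\sqrt{n}$ in the theorem comes entirely from the CLT normalization $\sqrt{n}\,\bar{h}_{n}\Rightarrow Z$. Your own formula for the residual infimum, $\inf_{\Delta}\bigl(\left\Vert \Delta\right\Vert _{\alpha^{-1}\text{-}(q,t)}-(D_{x}h_{i}^{T}\zeta)^{T}\Delta\bigr)$, silently drops the $n^{-1/2}$ and is the correct (unscaled) computation, so the slip is repairable, but as written step two is internally inconsistent and should be restated with $\lambda$ of order one, as in the paper's dual representation $\sqrt{n}R_{n}(\beta^{\ast})=\max_{\zeta}\{\zeta^{T}Z_{n}-\mathbb{E}_{P_{n}}\phi(X,Y,\beta^{\ast},\zeta)\}$.
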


\textbf{We now explain how to use Theorem \ref{Thm-RWPI-Asymptotic} to set the
regularization parameter in GR-Lasso logistic regression.}

\begin{enumerate}
\setlength\itemsep{.5em}

\item Estimate the $1-\chi$ quantile of $L_{4}$. We use use $\hat{\eta
}_{1-\chi}$ to denote the estimator for this quantile. This step involves
estimating $\Sigma$ from the training data.

\item We choose the regularization parameter $\lambda$ in the GR-Lasso problem
to be,
\[
\lambda=\delta=\hat{\eta}_{1-\chi}/\sqrt{n}.
\]

\end{enumerate}

\section{Numerical Experiments\label{Sec-Numerical}}

We proceed to numerical experiments on both simulated and real data to verify
the performance of our method for choosing the regularization parameter. We
apply \textquotedblleft grpreg\textquotedblright\ in R, from
\citeapos{breheny2016package}, to solve GR-Lasso for logistic regression. For GSRL
for linear regression, we consider apply the \textquotedblleft
grpreg\textquotedblright\ solver for the GR-Lasso problem combined with the
iterative procedure discussed in Section 2 of \citeapos{sun2011scaled} (see also
Section 5 of \citeapos{li2015flare} for the Lasso counterpart of such numerical
procedure). 
\bigskip\newline
\textbf{Data preparation for simulated experiments:} We
borrow the setting from example III in \citeapos{yuan2006model}, where the group
structure is determined by the third order polynomial expansion. More
specifically, we assume that we have 17 random variables $Z_{1},\dots,Z_{16}$
and $W$, they are i.i.d. and follow the normal distribution. The
covariates $X_{1},\dots,X_{16}$ are given as $X_{i}=\left(  Z_{i}+W\right)
/\sqrt{2}$. For the predictors, we consider each covariate and its second and
third order polynomial, i.e. $X_{i}$, $X_{i}^{2}$ and $X_{i}^{3}$. In total,
we have $48$ predictors. \newline\textbf{For linear regression}: The response
$Y$ is given by
\[
Y=\beta_{3,1}X_{3}+\beta_{3,2}X_{3}^{2}+\beta_{3,3}X_{3}^{3}+\beta_{5,1}%
X_{5}+\beta_{5,2}X_{5}^{2}+\beta_{5,3}X_{5}^{3}+e,
\]
where $\beta_{(\cdot,\cdot)}$ coefficients draw randomly and $e$ represents an independent random error.
\newline\textbf{For classification:} We consider $Y$ simulated by a Bernoulli
distribution, i.e.
\[
Y\sim Ber\left(  {1}/{\left[  1+\exp\left(  -\left(  \beta_{3,1}X_{3}%
+\beta_{3,2}X_{3}^{2}+\beta_{3,3}X_{3}^{3}+\beta_{5,1}X_{5}+\beta_{5,2}%
X_{5}^{2}+\beta_{5,3}X_{5}^{3}\right)  \right)  \right]  }\right)  .
\]
We compare the following methods for linear regression and logistic
regression: 1) groupwise regularization with asymptotic results (in Theorem
\ref{Thm-RWPI-Asymptotic_Lin}, \ref{Thm-RWPI-Asymptotic}) selected tuning
parameter (RWPI GRSL and RWPI GR-Lasso), 2) groupwise regularization with
cross-validation (CV GRSL and CV GR-Lasso), and 3) ordinary least square and
logistic regression (OLS and LR).

We report the error as the square loss for linear regression and
log-exponential loss for logistic regression. The training error is calculated
via the training data. The size of the training data is taken to be
$n=50,100,500$ and $1000$. The testing error is evaluated using a simulated
data set of size $1000$ using the same data generating process described
earlier. The mean and
standard deviation of the error are reported via $200$ independent runs of the
whole experiment, for each sample size $n$.

The detailed results are summarized in Table \ref{Table-Linear-GLasso} for
linear regression and Table \ref{Table-Logistic-GLasso} for logistic
regression. We can see that our procedure is very comparable to cross
validation, but it is significantly less time consuming and all of the data
can be directly used to estimate the model parameter, by-passing significant
data usage in the estimation of the regularization parameter via cross
validation\begin{table}[th]
{\small \centering
}
\par
{\small
\begin{tabular}
[c]{c|cc|cc|cc}
& \multicolumn{2}{c|}{RWPI GSRL} & \multicolumn{2}{c|}{CV GSRL} &
\multicolumn{2}{c}{OLS}\\\hline
Sample Size & Training & Testing & Training & Testing & Training & Testing\\
$n=50$ & $5.64\pm1.16$ & $9.15\pm3.58$ & $3.18 \pm1.07$ & $7.66 \pm2.69$ &
$0.07 \pm0.09$ & $80.98\pm30.53$\\
$n=100$ & $4.67\pm0.70$ & $5.83\pm1.38$ & $3.61\pm0.74$ & $5.22\pm1.05$ &
$2.09 \pm0.44$ & $73.35\pm16.51$\\
$n=500$ & $4.09\pm0.29$ & $4.16\pm0.27$ & $3.93\pm0.3$ & $4.12\pm0.27$ &
$3.63\pm0.27$ & $73.08\pm10.40$\\
$n=1000$ & $4.02\pm0.19$ & $4.11\pm0.26$ & $3.95\pm0.19$ & $4.11\pm0.26$ &
$3.82\pm0.19$ & $72.28\pm8.05$%
\end{tabular}
}\caption{Linear Regression Simulation Results. }%
\label{Table-Linear-GLasso}%
\end{table}\begin{table}[th]
{\small \centering
}
\par
{\small
\begin{tabular}
[c]{c|cc|cc|cc}
& \multicolumn{2}{c|}{RWPI GR-Lasso} & \multicolumn{2}{c|}{CV GR-Lasso} &
\multicolumn{2}{c}{Logistic Regression}\\\hline
Sample Size & Training & Testing & Training & Testing & Training & Testing\\
$n=50$ & $.683 \pm.016$ & $.702 \pm.014$ & $.459\pm.118$ & $.628\pm.099$ &
$.002\pm.001$ & $5.288 \pm1.741$\\
$n=100$ & $.593 \pm.038$ & $.618 \pm.029$ & $.450 \pm.061$ & $.551 \pm.037$ &
$.042 \pm.041$ & $4.571 \pm1.546$\\
$n=500$ & $.513 \pm.021$ & $.518 \pm.019$ & $.461 \pm.025$ & $.493\pm.018$ &
$.083 \pm.057$ & $1.553 \pm.355$\\
$n=1000$ & $.492 \pm.016$ & $.488 \pm.017$ & $.491 \pm.017$ & $.488 \pm.019$ &
$.442\pm.018$ & $.510 \pm.028$%
\end{tabular}
}\caption{Logistic Regression Simulation Results. }%
\label{Table-Logistic-GLasso}%
\end{table}
\bigskip\newline 
We also validated our method using the Breast Cancer
classification problem with data from the UCI machine learning database
discussed in \citeapos{Lichman:2013}. The data set contains $569$ samples with one
binary response and $30$ predictors. We consider all the predictors and their
first, second, and third order polynomial expansion. Thus, we end up having
$90$ predictors divided into $30$ groups. For each iteration, we randomly
split the data into a training set with $112$ samples and the rest in the
testing set. We repeat the experiment $500$ times to observe the
log-exponential loss function for the training and testing error. We compare
our asymptotic results based GR-Lasso logistic regression (RWPI GR-Lasso),
cross-validation based GR-Lasso logistic regression (CV GR-Lasso), vanilla
logistic regression (LR), and regularized logistic regression (LRL1). We can
observe, even when the sample size is small as in the example, our method
still provides very comparable results (see in Table \ref{LogReg_table-real}).
\begin{table}[th]
{\footnotesize \centering
\begin{tabular}
[c]{cc|cc|cc|cc}%
\multicolumn{2}{c|}{LR} & \multicolumn{2}{c|}{LRL1} & \multicolumn{2}{c|}{RWPI
GR-Lasso} & \multicolumn{2}{c}{CV GR-Lasso}\\\hline
Training & Testing & Training & Testing & Training & Testing & Training &
Testing\\
$0.0\pm0.0$ & $15.267\pm5.367$ & $.510\pm.215$ & $.414\pm.173$ & $.186\pm.032$
& $.240\pm.098$ & $.198\pm.041$ & $.213\pm.041$%
\end{tabular}
}\caption{Numerical results for breast cancer data set. }%
\label{LogReg_table-real}%
\end{table}

\section{Conclusion and Extensions\label{Sect_Conclusion}}

Our discussion of GSRL as a DRO problem has exposed rich interpretations which
we have used to understand GSRL's generalization properties by means of a game
theoretic formulation. Moreover, our DRO representation also elucidates the
crucial role of the regularization parameter in measuring the distributional
uncertainty present in the data. Finally, we obtained asymptotically valid
formulas for optimal regularization parameters under a criterion which is
naturally motivated, once again, thanks to our DRO\ formulation. Our
easy-to-implement formulas are shown to perform well compared to
(time-consuming) cross validation. 
\bigskip\newline We strongly believe that
our discussion in this paper can be easily extended to a wide range of machine
learning estimators. We envision formulating the DRO problem considering
different types of models and cost functions. We plan to investigate
algorithms which solve the DRO problem directly (even if no direct
regularization representation, as the one we considered here, exists).
Moreover, it is natural to consider different types of cost functions which
might improve upon the simple choice which, as we have shown, implies the GSRL
estimator. Questions related to alternative choices of cost functions are also
under current research investigations, and our progress will be reported in
the near future.



\bibliographystyle{apalike}
\bibliography{DRO_GroupLasso}

\appendix

\section{Technical Proofs\label{Sect_Appendix}}

We will first derive some properties for $\alpha$-$(p,s)$ norm we defined in
\eqref{alpha_ps_norm}, then we move to the proof for DRO problem in Section
\ref{Sect_Appendix_DRO} and the optimal selection of regularization parameter
in Section \ref{Sect_Appendix_RWP}. We will focus on
the proof for linear regression and leave the part for logistic regression,
which follows the similar techniques, in the Appendix B.

\subsection{Basic Properties of the $\alpha$-$(p,s)$ Norm
\label{Sec_Appendix_alpha-norm}}

The following Proposition, which describes basic properties of the $\alpha
$-$(p,s)$ norm, will be very useful in our proofs.

\begin{proposition}
\label{Thm-Dual-Norm} For $\alpha-(p,s)$ norm defined for $\mathbb{R}^{d}$ as
in \eqref{alpha_ps_norm} and the notations therein, we have the following
properties: \newline\textbf{I)} The dual norm of $\alpha-(p,s)$ norm is
$\alpha^{-1}\text{-}(q,t)$ norm, where $\alpha^{-1}=\left(  1/\alpha
_{1},\ldots,1/\alpha_{\bar{d}}\right)  ^{T}$, $1/p+1/q=1$, and $1/s+1/t=1$
(i.e. $p,q$ are conjugate and $s,t$ are conjugate). \newline\textbf{II)} The
H\"{o}lder inequality holds for the $\alpha\text{-}(p,s)$ norm, i.e. for
$a,b\in\mathbb{R}^{d}$, we have,
\[
a^{T}b\leq\left\Vert a\right\Vert _{\alpha\text{-}(p,s)}\left\Vert
b\right\Vert _{\alpha^{-1}\text{-}(q,t)},
\]
where the equality holds if and only if $sign(a(G_{j})_{i})=sign(b(G_{j}%
)_{i})$ and
\[
\left\vert \alpha_{j}a(G_{j})_{i}\right\vert \left\Vert \frac{1}{\alpha_{j}%
}b(G_{j})\right\Vert _{q}^{q/p-t/s}\left\Vert b\right\Vert _{\alpha
^{-1}\text{-}(q,t)}^{t/s}={\left\vert \frac{1}{\alpha_{j}}b(G_{j}%
)_{i}\right\vert ^{q/p}}.
\]
is true for all $j=1,\ldots,\bar{d}$ and $i=1,\ldots,g_{j}$. \newline The
triangle inequality holds, i.e. for $a,b\in\mathbb{R}^{d}$ and $a\neq0$, we
have
\[
\left\Vert a\right\Vert _{\alpha\text{-}(p,s)}+\left\Vert b\right\Vert
_{\alpha\text{-}(p,s)}\geq\left\Vert a+b\right\Vert _{\alpha\text{-}(p,s)},
\]
where the equality holds if and only if, there exists nonnegative $\tau$, such
that $\tau a=b$.
\end{proposition}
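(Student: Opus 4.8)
The plan is to treat $\|\cdot\|_{\alpha\text{-}(p,s)}$ as a two-level (``mixed'') norm and reduce every claim to the classical $\ell_p$/$\ell_q$ and $\ell_s$/$\ell_t$ dualities applied successively: first within each group $G_i$, then across the $\bar d$ groups. Throughout I would write $u_i = \alpha_i\|a(G_i)\|_p$ and $v_i = \alpha_i^{-1}\|b(G_i)\|_q$, so that $\|a\|_{\alpha\text{-}(p,s)} = \|u\|_s$ and $\|b\|_{\alpha^{-1}\text{-}(q,t)} = \|v\|_t$, where $\|\cdot\|_s,\|\cdot\|_t$ are the ordinary $\ell_s,\ell_t$ norms on $\R^{\bar d}$. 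This substitution is the organizing device: the inner index handles the $p,q$ pair and the weights $\alpha_i$ distribute over the two levels so that they invert to $\alpha_i^{-1}$ in the dual, while the outer index handles the $s,t$ pair.

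I would prove the H\"older inequality of \textbf{II)} first, since it simultaneously yields the upper bound for the dual norm in \textbf{I)}. Writing $a^T b = \sum_{i=1}^{\bar d} a(G_i)^T b(G_i)$ and applying standard $\ell_p$/$\ell_q$ H\"older inside each group gives $a(G_i)^T b(G_i)\le \|a(G_i)\|_p\|b(G_i)\|_q = u_i v_i$; summing and applying standard $\ell_s$/$\ell_t$ H\"older to $\sum_i u_i v_i\le \|u\|_s\|v\|_t$ produces $a^T b\le \|a\|_{\alpha\text{-}(p,s)}\|b\|_{\alpha^{-1}\text{-}(q,t)}$, hence $\|b\|_\ast\le\|b\|_{\alpha^{-1}\text{-}(q,t)}$ for the dual norm $\|\cdot\|_\ast$. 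The stated equality condition is exactly the conjunction of the two classical equality conditions: sign alignment $\mathrm{sign}(a(G_j)_i)=\mathrm{sign}(b(G_j)_i)$, the within-group proportionality $|a(G_j)_i|^p\propto|b(G_j)_i|^q$, and the across-group proportionality $u_j^s\propto v_j^t$. I would then verify by direct substitution that eliminating the proportionality constants from these three conditions reproduces the displayed identity carrying the exponent $q/p-t/s$.

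To finish \textbf{I)} it remains to show the bound is attained, i.e. to exhibit $a$ with $\|a\|_{\alpha\text{-}(p,s)}\le 1$ and $a^T b = \|b\|_{\alpha^{-1}\text{-}(q,t)}$. I would construct such $a$ group by group by setting $a(G_j)_i = \kappa_j\,\mathrm{sign}(b(G_j)_i)\,|b(G_j)_i|^{q/p}$, choosing the group weights $\kappa_j$ so that $u$ is (componentwise) proportional to the appropriate power of $v$, which saturates both levels of H\"older at once, and then rescaling so $\|u\|_s=1$; reading off $a^T b$ for this choice gives $\|v\|_t=\|b\|_{\alpha^{-1}\text{-}(q,t)}$, establishing the reverse inequality. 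For the triangle inequality I would use two successive Minkowski steps: within each group $\alpha_i\|(a+b)(G_i)\|_p\le \alpha_i\|a(G_i)\|_p+\alpha_i\|b(G_i)\|_p$, so the nonnegative vector $C$ with $C_i=\alpha_i\|(a+b)(G_i)\|_p$ is dominated componentwise by $A+B$ with $A_i=\alpha_i\|a(G_i)\|_p$, $B_i=\alpha_i\|b(G_i)\|_p$; by monotonicity of $\|\cdot\|_s$ on the nonnegative orthant followed by Minkowski in $\ell_s$, $\|C\|_s\le\|A+B\|_s\le\|A\|_s+\|B\|_s$, which is precisely $\|a+b\|_{\alpha\text{-}(p,s)}\le\|a\|_{\alpha\text{-}(p,s)}+\|b\|_{\alpha\text{-}(p,s)}$. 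Tracing equality through both steps forces each $a(G_i),b(G_i)$ to be parallel with a common nonnegative ratio and then forces those ratios to agree, collapsing to $b=\tau a$ for a single $\tau\ge 0$.

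I expect the genuine obstacle to be bookkeeping of the equality cases rather than the inequalities, which are verbatim classical. Matching the displayed H\"older equality identity to the combined proportionality constants requires careful algebra, and the boundary exponents make the classical equality conditions degenerate, so they must be handled separately: for $t=\infty$ the across-group saturation becomes ``$v_j$ attains its maximum'' instead of a proportionality, and for the paper's own choice $s=1$ the outer Minkowski step is an identity on the nonnegative orthant, so that the clean global characterization $b=\tau a$ for the triangle-equality case is really the $1<p,s<\infty$ (strictly convex) statement and weakens to groupwise parallelism in the degenerate regimes. I would therefore carry the main argument for general $1\le p,q,s,t\le\infty$ and append short case checks for the endpoint exponents, flagging explicitly which equality characterizations survive in the degenerate cases relevant to the recommended parameters.
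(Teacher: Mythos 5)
Your proposal is correct and follows essentially the same route as the paper's proof: reduce to the vectors $u_i=\alpha_i\|a(G_i)\|_p$, $v_i=\alpha_i^{-1}\|b(G_i)\|_q$, prove the H\"older inequality by applying classical H\"older first within groups and then across groups, obtain duality by exhibiting the extremal $a(G_j)_i\propto \mathrm{sign}(b(G_j)_i)|b(G_j)_i|^{q/p}$, and prove the triangle inequality by two nested Minkowski steps with the equality cases traced through each level. Your explicit flagging of the degenerate exponents ($t=\infty$, $s=1$) is a point of care the paper glosses over, but it does not change the argument.
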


\begin{proof}
[Proof of Proposition \ref{Thm-Dual-Norm}]We first proceed to prove II). Let
us consider any $a,b\in\mathbb{R}^{d}$. We can assume $a,b\neq0$, otherwise
the claims are immediate. The inner product (or dot product) of $a$ and $b$ an
be written as:
\[
a^{T}b=\sum_{j=1}^{\bar{d}}\left[  \sum_{i=1}^{g_{j}}a(G_{j})_{i}b(G_{j}%
)_{i}\right]  \leq\sum_{j=1}^{\bar{d}}\left[  \sum_{i=1}^{g_{j}}\left\vert
a(G_{j})_{i}\right\vert \cdot\left\vert b(G_{j})_{i}\right\vert \right]  .
\]
The equality holds for the above inequality if and only if $a(G_{j})_{i}$ and
$b(G_{j})_{i}$ shares the same sign. For each fixed $j=1,\ldots,\bar{d}$, we
consider the term in the bracket,
\[
\sum_{i=1}^{g_{j}}\left\vert a(G_{j})_{i}\right\vert \cdot\left\vert
b(G_{j})_{i}\right\vert =\sum_{i=1}^{g_{j}}\alpha_{j}\left\vert a(G_{j}%
)_{i}\right\vert \cdot\left\vert b(G_{j})_{i}\right\vert /\alpha_{j}%
\leq\left\Vert \alpha_{j}a(G_{j})\right\Vert _{p}\cdot\left\Vert \frac
{1}{\alpha_{j}.}b(G_{j})\right\Vert _{q}.
\]
The above inequality is due to H\"{o}lder's inequality for $p-$norm and the
equality holds if and only if
\[
\left\Vert \frac{1}{\alpha_{j}.}b(G_{j})\right\Vert _{q}^{q}\left\vert
\alpha_{j}a(G_{j})_{i}\right\vert ^{p}=\left\Vert \alpha_{j}a(G_{j}%
)\right\Vert _{p}^{p}\left\vert \frac{1}{\alpha_{j}}b(G_{j})_{i}\right\vert
^{q},
\]
is true for all $i=\overline{1,g_{j}}$. Combining the above result for each
$j=1,\ldots,\bar{d}$, we have,
\[
a^{T}b\leq\sum_{j=1}^{\bar{d}}\left\Vert \alpha_{j}a(G_{j})\right\Vert
_{p}\cdot\left\Vert \frac{1}{\alpha_{j}}b(G_{j})\right\Vert _{q}\leq\left\Vert
a\right\Vert _{\alpha\text{-}(p,s)}\cdot\left\Vert b\right\Vert _{\alpha
^{-1}\text{-}(q,t)},
\]
where the final inequality is due to H\"{o}lder inequality applied to the
vectors
\begin{align}
\tilde{a}  &  =\left(  \alpha_{1}\left\Vert a(G_{1})\right\Vert _{p}%
,\ldots,\alpha_{\bar{d}}\left\Vert a(G_{\bar{d}})\right\Vert _{p}\right)
^{T},\text{ }\label{Eqn-group-vec}
\text{and }\tilde{b}    =\left(  \frac{1}{\alpha_{1}}\left\Vert b_{G_{1}%
}\right\Vert _{q},\ldots,\frac{1}{\alpha_{\bar{d}}}\left\Vert b(G_{\bar{d}%
})\right\Vert _{q}\right)  ^{T}.
\end{align}
This proves the H\"{o}lder type inequality stated in the theorem. We can
further observe that the final inequality becomes equality if and only if
\[
\left\Vert b\right\Vert _{\alpha^{-1}-(q,t)}^{t}\left\Vert \alpha_{j}%
a(G_{j})\right\Vert _{p}^{s}=\left\Vert a\right\Vert _{\alpha-(p,s)}%
^{s}\left\Vert \frac{1}{\alpha_{j}}b(G_{j})\right\Vert _{q}^{t},
\]
holds for all $j=1,\ldots,\bar{d}$. Combining the conditions for equalities
hold for each inequality, we conclude condition II) in the statement of the proposition.
\newline Now we proceed to prove I). Recall the definition of a dual norm,
i.e. $$\left\Vert b\right\Vert _{\alpha\text{-}(p,s)}^{\ast}=\sup_{a:\left\Vert
a\right\Vert _{\alpha\text{-}(p,s)}=1}a^{T}b$$. Now, choose $b\in\mathbb{R}%
^{d}$, $b\neq0$, and let us take $a$ satisfying, $\left\Vert a\right\Vert
_{\alpha-(p,s)}=1$ and
\[
a(G_{j})_{i}=\frac{sign(b(G_{j})_{i})}{\alpha_{j}}\frac{\left\vert \frac
{1}{\alpha_{j}}b(G_{j})_{i}\right\vert ^{q/p}}{\left\Vert \frac{1}{\alpha_{j}%
}b(G_{j})\right\Vert _{q}^{q/p-t/s}\left\Vert b\right\Vert _{\alpha
^{-1}\text{-}(q,t)}^{t/s}}.
\]
By part II), we have that
\[
\left\Vert b\right\Vert _{\alpha-(p,s)}^{\ast}=\sup_{a:\left\Vert a\right\Vert
_{\alpha-(p,s)}=1}a^{T}b=\left\Vert a\right\Vert _{\alpha-(p,s)}\left\Vert
b\right\Vert _{\alpha^{-1}-(q,t)}=\left\Vert b\right\Vert _{\alpha^{-1}%
-(q,t)}.
\]
Thus we proved part I). Finally, let us discuss the triangle inequality. For
any $a,b\in\mathbb{R}^{d}$ and $a,b\neq0$ we have
\begin{align*}
&  \quad\quad\left\Vert a\right\Vert _{\alpha\text{-}(p,s)}+\left\Vert
b\right\Vert _{\alpha\text{-}(p,s)}\\
&=\left[  \sum_{j=1}^{\bar{d}}\alpha
_{j}\left\Vert a(G_{j})\right\Vert _{p}^{s}\right]  ^{1/s}+\left[  \sum
_{j=1}^{\bar{d}}\alpha_{j}\left\Vert b(G_{j})\right\Vert _{p}^{s}\right]
^{1/s}\\
&  \geq\left[  \sum_{j=1}^{\bar{d}}\alpha_{j}\left(  \left\Vert a(G_{j}%
)\right\Vert _{p}^{s}+\left\Vert b(G_{j})\right\Vert _{p}^{s}\right)  \right]
^{1/s}\\
&\geq\left[  \sum_{j=1}^{\bar{d}}\alpha_{j}\left\Vert a(G_{j}%
)+b(G_{j})\right\Vert _{p}^{s}\right]  ^{1/s}\\
&  =\left\Vert a+b\right\Vert _{\alpha\text{-}(p,s)}.
\end{align*}
For the above derivation, the first equality is due to definition in
\eqref{alpha_ps_norm}, Second equality is applying the triangle inequality of
$s$-norm for $\tilde{a}$ and $\tilde{b}$ defined in \eqref{Eqn-group-vec},
where the equality holds if and only if, there exist positive number
$\tilde{\tau}$, such that $\tilde{\tau}\tilde{a}=\tilde{b}$. Third inequality
is due to triangle equality of $p$-norm to $a(G_{j})$ and $b(G_{j})$ for each
$j=1,\ldots,\bar{d}$, where the equality holds if and only if, there exists
nonnegative numbers $\tau_{j}$, such that $\tau_{j}a(G_{j})=b(G_{j})$.
Combining the equality condition for second and third estimate above, we can
conclude the equality condition for the triangle inequality for $\alpha
\text{-}(p,s)$ norm is if and only if there exists a non-negative number
$\tau$, such that $\tau a=b$.
\end{proof}

\subsection{Proof of DRO for Linear Regression \label{Sect_Appendix_DRO}}

\begin{proof}
[Proof of Theorem \ref{Thm-Dro-GLasso-Regression}]Let us apply the strong
duality results, as in the Appendix of \citeapos{blanchet2016robust}, for worst-case expected loss function, which
is a semi-infinity linear programming problem, and write the worst-case loss
as,
\[
\sup_{P:D_{c}\left(  P,P_{n}\right)  \leq\delta}\mathbb{E}_{P}\left[  \left(
Y-X^{T}\beta\right)  ^{2}\right]  =\min_{\gamma\geq0}\left\{  \gamma
\delta-\frac{1}{n}\sum_{i=1}^{n}\sup_{u}\left\{  \left(  y_{i}-u^{T}%
\beta\right)  ^{2}-\gamma\left\Vert x_{i}-u\right\Vert _{\alpha^{-1}%
\text{-}(q,t)}^{2}\right\}  \right\}  .
\]
For each $i$, let us consider the inner optimization problem over $u$. We can
denote $\Delta=u-x_{i}$ and $e_{i}=y_{i}-x_{i}^{T}\beta$ for notation
simplicity, then the $i-$th inner optimization problem becomes,
\begin{align}
&  e_{i}^{2}+\sup_{\Delta}\left\{  \left(  \Delta^{T}\beta\right)  ^{2}%
-2e_{i}\Delta^{T}\beta-\gamma\left\Vert \Delta\right\Vert _{\alpha^{-1}%
-(q,t)}^{2}\right\}  \nonumber\\
= &  e_{i}^{2}+\sup_{\Delta}\left\{  \left(  \sum_{j}\left\vert \Delta
_{j}\right\vert \left\vert \beta_{j}\right\vert \right)  ^{2}+2\left\vert
e_{i}\right\vert \sum_{j}\left\vert \Delta_{j}\right\vert \left\vert \beta
_{j}\right\vert -\gamma\left\Vert \Delta\right\Vert _{\alpha^{-1}-(q,t)}%
^{2}\right\}  \nonumber\\
= &  e_{i}^{2}\sup_{\left\Vert \Delta\right\Vert _{\alpha^{-1}\text{-}(q,t)}%
}\left\{  \left\Vert \beta\right\Vert _{\alpha\text{-}(p,s)}^{2}\left\Vert
\Delta\right\Vert _{\alpha^{-1}\text{-}(q,t)}^{2}+2\left\Vert \beta\right\Vert
_{\alpha\text{-}(p,s)}\left\vert e_{i}\right\vert \left\Vert \Delta\right\Vert
_{\alpha^{-1}\text{-}(q,t)}-\gamma\left\Vert \Delta\right\Vert _{\alpha
^{-1}\text{-}(q,t)}^{2}\right\}  \nonumber\\
= &  \left\{
\begin{array}
[c]{rcl}%
e_{i}^{2}\frac{\gamma}{\gamma-\left\Vert \beta\right\Vert _{\alpha
\text{-}(p,s)}^{2}} & \text{ if }\gamma>\left\Vert \beta\right\Vert
_{\alpha\text{-}(p,s)}^{2}, & \\
+\infty\text{ } & \text{ if }\gamma\leq\left\Vert \beta\right\Vert
_{\alpha\text{-}(p,s)}^{2}. &
\end{array}
\right.  ,\label{14a}%
\end{align}
where the development uses the duality results developed in Proposition
\ref{Thm-Dual-Norm}. The last equality is optimize over $\Delta$ for two
different cases of $\lambda$. \newline Since optimization over $\gamma$ is a
minimization, the outer player will always select $\gamma$ that avoids an
infinite value of the game. Then we can write the worst-case expected loss
function as,
\begin{align}
&\sup_{P:D_{c}\left(  P,P_{n}\right)  \leq\delta}\mathbb{E}_{P}\left[  \left(
Y-X^{T}\beta\right)  ^{2}\right]\label{14b} \\
 = &  \min_{\gamma>\left\Vert \beta
\right\Vert _{\alpha\text{-}(p,s)}^{2}}\left\{  \gamma\delta-\gamma
\frac{E_{P_{n}}l\left(  X,Y;\beta\right)  }{\gamma-\left\Vert \beta\right\Vert
_{\alpha\text{-}(p,s)}^{2}}\right\} \nonumber \\
= &  \left(  \sqrt{E_{P_{n}}l\left(  X,Y;\beta\right)  }+\sqrt{\delta
}\left\Vert \beta\right\Vert _{\alpha\text{-}(p,s)}\right)  ^{2}.\nonumber
\end{align}
The first equality in (\ref{14b}) is a plug-in from the result in (\ref{14a}).
For the second equality, we can observe the target function is convex and
differentiable and as $\gamma\rightarrow\infty$ and $\gamma\rightarrow
\left\Vert \beta\right\Vert _{\alpha\text{-}(p,s)}^{2}$, the value function
will be infinity. We can solve this convex optimization problem which leads to
the result above. We further take square root and take minimization over
$\beta$ on both sides, we proved the claim of the theorem.
\end{proof}

\subsection{Proof for Optimal Selection of Regularization for Linear
Regression\label{Sect_Appendix_RWP}}

\begin{proof}
[Proof for Theorem \ref{Thm-RWPI-Asymptotic_Lin}]For linear regression with
the square loss function, if we apply the strong duality result for
semi-infinity linear programming problem as in Section B of
\citeapos{blanchet2016robust}, we can write the scaled RWP function for linear
regression as
\begin{equation}
nR_{n}\left(  \beta^{\ast}\right)  =\sup_{\zeta}\left\{  -\zeta^{T}%
Z_{n}-\mathbb{E}_{P_{n}}\phi\left(  X_{i},Y_{i},\beta^{\ast},\zeta\right)
\right\}  ,\label{14c}%
\end{equation}
where $Z_{n}=\frac{1}{\sqrt{n}}\sum_{i=1}^{n}e_{i}X_{i}$ and
\[
\phi\left(  X_{i},Y_{i},\beta^{\ast},\zeta\right)  =\sup_{\Delta}\left\{
e_{i}\zeta^{T}\Delta-\left(  \beta^{\ast\;T}\Delta\right)  \left(  \zeta
^{T}X_{i}\right)  -\left(  \left\Vert \Delta\right\Vert _{\alpha^{-1}%
\text{-}(q,t)}^{2}+n^{-1/2}\left(  \beta^{\ast\; T}\Delta\right)  \left(
\zeta^{T}\Delta\right)  \right)  \right\}  .
\]
Follow the similar discussion in the proof of Theorem 4 in
\citeapos{blanchet2016robust}. Applying Lemma 2 in \citeapos{blanchet2016robust}, we
can argue that the optimizer $\zeta$ can be restrict on a compact set
asymptotically with high probability. We can apply the uniform law of large
number estimate as in Lemma 3 of \citeapos{blanchet2016robust} to the second term
in (\ref{14c}) and we obtain
\begin{equation}
nR_{n}\left(  \beta^{\ast}\right)  =\sup_{\zeta}\{-\zeta^{T}Z_{n}%
-\mathbb{E}\phi\left(  X,Y,\beta,\zeta\right)  ]\}+o_{P}(1).\label{14d}%
\end{equation}
For any fixed $X,Y$, as $n\rightarrow\infty$, we can simplify the contribution
of $\phi\left(  \cdot\right)  $ inside sup in (\ref{14d}). This is done by
applying the duality result (H\"{o}lder-type inequality) in Proposition
\ref{Thm-Dual-Norm} and noting that $\phi\left(  \cdot\right)  $ becomes
quadratic in $\left\Vert \Delta\right\Vert _{\alpha^{-1}\text{-}(q,t)}$. This
results in the simplified expression
\[
nR_{n}\left(  \beta^{\ast}\right)  =\sup_{\zeta}\left\{  -\zeta^{T}%
Z_{n}-\mathbb{E}\left[  \left\Vert e\zeta-(\zeta^{T}X)\beta^{\ast}\right\Vert
_{\alpha\text{-}(p,s)}^{2}\right]  \right\}  +o_{P}(1).
\]
Since we can observe that, $Z_{n}\Rightarrow\sigma Z$, then as $n\rightarrow
\infty$ we proved the first argument. For this step we need to show that the
feasible region can be compactified with high probability. This
compactification argument is done using a technique similar to Lemma 2 in
\citeapos{blanchet2016robust}. \newline By the definition of $L_{1}$, we can apply
H\"{o}lder inequality to the first term, and split the optimization into
optimizing over direction $\left\Vert \zeta^{\prime}\right\Vert _{\alpha
\text{-}(p,s)}=1$ and magnitude $a\geq0$. Thus, we have
\[
L_{1}\leq\max_{\zeta^{\prime}:\left\Vert \zeta^{\prime}\right\Vert
_{\alpha\text{-}(p,s)}=1}\max_{a\geq0}\left\{  2a\sigma\left\Vert Z\right\Vert
_{\alpha^{-1}\text{-}(q,t)}-a^{2}\mathbb{E}\left[  \left\Vert e\zeta^{\prime
}-(\zeta^{\prime T}X)\beta^{\ast}\right\Vert _{\alpha\text{-}(p,s)}%
^{2}\right]  \right\}  .
\]
It is easy to solve the quadratic programming problem in $a$ and we conclude
that
\[
L_{1}\leq\frac{\sigma^{2}\left\Vert Z\right\Vert _{\alpha^{-1}\text{-}%
(q,t)}^{2}}{\min_{\zeta^{\prime}:\left\Vert \zeta^{\prime}\right\Vert
_{\alpha\text{-}(p,s)}=1}\mathbb{E}\left[  \left\Vert e\zeta^{\prime}%
-(\zeta^{\prime T}X)\beta^{\ast}\right\Vert _{\alpha\text{-}(p,s)}^{2}\right]
}.
\]
For the denominator, we have estimates as follows:
\begin{align*}
&  \min_{\zeta^{\prime}:\left\Vert \zeta^{\prime}\right\Vert _{\alpha
\text{-}(p,s)}=1}\mathbb{E}\left[  \left\Vert e\zeta^{\prime}-(\zeta^{\prime
T}X)\beta^{\ast}\right\Vert _{\alpha\text{-}(p,s)}^{2}\right] \\
& \geq\min
_{\zeta^{\prime}:\left\Vert \zeta^{\prime}\right\Vert _{\alpha\text{-}%
(p,s)}=1}\mathbb{E}\left[  \left\vert e\right\vert -\left\vert \zeta
^{T}X\right\vert \left\Vert \beta^{\ast}\right\Vert _{\alpha\text{-}%
(p,s)}\right]  ^{2}\\
&  \geq Var(\left\vert e\right\vert )+\min_{\zeta^{\prime}:\left\Vert
\zeta^{\prime}\right\Vert _{\alpha\text{-}(p,s)}=1}\left(  \left\Vert
\beta^{\ast}\right\Vert _{\alpha\text{-}(p,s)}\mathbb{E}\left\vert
\zeta^{\prime T}X\right\vert -\mathbb{E}\left\vert e\right\vert \right)
^{2}\geq Var(\left\vert e\right\vert ).
\end{align*}
The first estimate is due to the triangle inequality in Proposition
\ref{Thm-Dual-Norm}, the second estimate follows using Jensen's inequality,
the last inequality is immediate. Combining these inequalities we conclude
\[
L_{1}\leq\sigma^{2}\left\Vert Z\right\Vert _{\alpha^{-1}\text{-}(q,t)}%
^{2}/Var(\left\vert e\right\vert ).
\]

\end{proof}

\section{Additional Materials\label{Sec-Addidtional}}

In this Section, we will provide the proofs for DRO
representation and asymptotic result for logistic regression, which were
discussed in Theorem \ref{Thm-Dro-GLasso-Classification} and Theorem
\ref{Thm-RWPI-Asymptotic}, in Section \ref{Sec_Append_LR_DRO} and Section
\ref{Sect_Appendix_RWP-LR}. In addition, we will provide the results under the
high dimension setting for linear regression, where the number of predictors
growth with the sample size, as a generalization of Theorem
\ref{Thm-RWPI-Asymptotic_Lin}, which we proved in Section \ref{Sec-Supply-Highdim}.

\subsection{Proof of DRO for Logistic Regression\label{Sec_Append_LR_DRO}}

\begin{proof}
[Proof for Theorem \ref{Thm-Dro-GLasso-Classification}]By applying strong
duality results for semi-infinity linear programming problem in
\citeapos{blanchet2016robust}, we can write
the worst case expected loss function as,
\begin{align*}
&  \quad\sup_{P:D_{c}\left(  P,P_{n}\right)  \leq\delta}\mathbb{E}_{P}\left[
\log\left(  1+\exp\left(  -Y\beta^{T}X\right)  \right)  \right]  \\
&  =\min_{\gamma\geq0}\left\{  \gamma\delta-\frac{1}{n}\sum_{i=1}^{n}\sup
_{u}\left\{  \log\left(  1+\exp\left(  -Y_{i}\beta^{T}u\right)  \right)
-\gamma\left\Vert X_{i}-u\right\Vert _{\alpha^{-1}\text{-}(q,t)}\right\}
\right\}  .
\end{align*}
For each $i$, we can apply Lemma 1 in
\citeapos{shafieezadeh-abadeh_distributionally_2015} and the dual norm result in
Proposition \ref{Thm-Dual-Norm} to deal with the inner optimization problem.
It gives us,
\begin{align*}
&  \sup_{u}\left\{  \log\left(  1+\exp\left(  -Y_{i}\beta^{T}u\right)
\right)  -\gamma\left\Vert X_{i}-u\right\Vert _{\alpha^{-1}\text{-}%
(q,t)}\right\}  \\
= &  \left\{
\begin{array}
[c]{ccc}%
\log\left(  1+\exp\left(  -Y_{i}\beta^{T}X_{i}\right)  \right)   & \text{if} &
\left\Vert \beta\right\Vert _{\alpha\text{-}(p,s)}\leq\gamma,\\
\infty & \text{if} & \left\Vert \beta\right\Vert _{\alpha\text{-}(p,s)}%
>\gamma.
\end{array}
\right.
\end{align*}
Moreover, since the outer player wishes to minimize, $\gamma$ will be chosen
to satisfy $\gamma\geq\left\Vert \beta\right\Vert _{\alpha\text{-}(p,s)}$. We
then conclude
\begin{align*}
&  \min_{\gamma\geq0}\left\{  \gamma\delta-\frac{1}{n}\sum_{i=1}^{n}\sup
_{u}\left\{  \log\left(  1+\exp\left(  -Y_{i}\beta^{T}u\right)  \right)
-\gamma\left\Vert X_{i}-u\right\Vert _{\alpha^{-1}\text{-}(q,t)}\right\}
\right\}  \\
&  =\min_{\gamma\geq\left\Vert \beta\right\Vert _{\alpha\text{-}(p,s)}%
}\left\{  \delta\gamma+\frac{1}{n}\sum_{i=1}^{n}\log\left(  1+\exp\left(
-Y_{i}\beta^{T}X_{i}\right)  \right)  \right\}  \\
&  =\frac{1}{n}\sum_{i=1}^{n}\log\left(  1+\exp\left(  -Y_{i}\beta^{T}%
X_{i}\right)  \right)  +\delta\left\Vert \beta\right\Vert _{\alpha
\text{-}(p,s)},
\end{align*}
where the last equality is obtained by noting that the objective function is
continuous and monotone increasing in $\gamma$, thus $\gamma=\left\Vert
\beta\right\Vert _{\alpha\text{-}(p,s)}$ is optimal. Hence, we conclude the
DRO formulation for GR-Lasso logistic regression.
\end{proof}

\subsection{Proof of Optimal Selection of Regularization for Logistic
Regression\label{Sect_Appendix_RWP-LR}}

\begin{proof}
[Proof of Theorem \ref{Thm-RWPI-Asymptotic}]We can apply strong duality result
for semi-infinite linear programming problem in Section B of
\citeapos{blanchet2016robust}, and write the scaled RWP function evaluated at
$\beta^{\ast}$ in the dual form as,
\[
\sqrt{n}R_{n}\left(  \beta^{\ast}\right)  =\max_{\zeta}\left\{  \zeta^{T}%
Z_{n}-\mathbb{E}_{P_{n}}\phi\left(  X,Y,\beta^{\ast},\zeta\right)  \right\}  ,
\]
where $Z_{n}=\frac{1}{n}\sum_{i}^{n}\frac{Y_{i}X_{i}}{1+\exp\left(  Y_{i}%
X_{i}^{T}\beta^{\ast}\right)  }$ and
\[
\phi\left(  X,Y,\beta^{\ast},\zeta\right)  =\max_{u}\left\{  Y\zeta^{T}\left(
\frac{X}{1+\exp\left(  YX^{T}\beta^{\ast}\right)  }-\frac{u}{1+\exp\left(
Yu^{T}\beta^{\ast}\right)  }\right)  -\left\Vert X-u\right\Vert _{\alpha
^{-1}\text{-}(q,t)}\right\}  .
\]
We proceed as in our proof of Theorem \ref{Thm-RWPI-Asymptotic_Lin} in this
paper and also adapting the case $\rho=1$ for Theorem 1 in
\citeapos{blanchet2016robust}. We can apply Lemma 2 in \citeapos{blanchet2016robust}
and conclude that the optimizer $\zeta$ can be taken to lie within a compact
set with high probability as $n\rightarrow\infty$. We can combine the uniform
law of large number estimate as in Lemma 3 of \citeapos{blanchet2016robust} and
obtain
\[
\sqrt{n}R_{n}\left(  \beta\right)  =\max_{\zeta}\left\{  \zeta^{T}%
Z_{n}-\mathbb{E}_{P}\phi\left(  X,Y,\beta^{\ast},\zeta\right)  \right\}
+o_{P}(1).
\]
For the optimization problem defining $\phi\left(  \cdot\right)  $, we can
apply results in Lemma 5 in Section A.3 of \citeapos{blanchet2016robust}, we know,
for any choice of $\tilde{\zeta}$, if,
\[
\mathrm{ess}\sup_{X,Y}\left\Vert \tilde{\zeta}^{T}\frac{y\left(  1+\exp\left(
YX^{T}\beta^{\ast}\right)  \right)  I_{d\times d}-XX^{T}}{\left(
1+\exp\left(  YX^{T}\beta^{\ast}\right)  \right)  ^{2}}\right\Vert
_{\alpha\text{-}(p,s)}>1,
\]
we have $\mathbb{E}\left[  \phi\left(  X,Y,\beta^{\ast},\tilde{\zeta}\right)
\right]  =\infty$. Since the outer optimization problem is maximization over
$\zeta$, the player will restrict $\zeta$ within the set $A$, where
\[
A=\left\{  \zeta\in\mathbb{R}^{d}:\mathrm{ess}\sup_{X,Y}\left\Vert \zeta
^{T}\frac{y\left(  1+\exp\left(  YX^{T}\beta^{\ast}\right)  \right)
I_{d\times d}-XX^{T}}{\left(  1+\exp\left(  YX^{T}\beta^{\ast}\right)
\right)  ^{2}}\right\Vert _{\alpha\text{-}(p,s)}\leq1\right\}  .
\]
Moreover, it is easy to calculate, if $\zeta\in A$, we have $\mathbb{E}%
[\phi\left(  X,Y,\beta^{\ast},\zeta\right)  ]=0$, thus we have the scaled RWP
function has the following estimate, as $n\rightarrow\infty$
\[
\sqrt{n}R_{n}\left(  \beta\right)  =\max_{\zeta\in A}\zeta^{T}Z_{n}+o_{P}(1).
\]
Letting $n\rightarrow\infty$, we obtain the exact asymptotic result.
\bigskip\newline For the stochastic upper bound, let us recall for the
definition of the set $A$ and consider the following estimate
\begin{align*}
&  \left\Vert \zeta^{T}\frac{y\left(  1+\exp\left(  YX^{T}\beta^{\ast}\right)
\right)  I_{d\times d}-XX^{T}}{\left(  1+\exp\left(  YX^{T}\beta^{\ast
}\right)  \right)  ^{2}}\right\Vert _{\alpha\text{-}(p,s)}\\
\geq &  \left\Vert \frac{Y\zeta}{1+\exp\left(  Y\beta^{\ast\;T}X\right)
}\right\Vert _{\alpha\text{-}(p,s)}-\left\Vert \frac{\zeta^{T}X\beta^{\ast}%
}{\left(  1+\exp\left(  Y\beta^{\ast\;T}X\right)  \right)  ^{2}}\right\Vert
_{\alpha\text{-}(p,s)}\\
\geq &  \left(  \frac{1}{1+\exp\left(  Y\beta^{\ast\;T}X\right)  }%
-\frac{\left\Vert X\right\Vert _{\alpha^{-1}\text{-}(q,t)}\left\Vert
\beta^{\ast}\right\Vert _{\alpha\text{-}(p,s)}}{\left(  1+\exp\left(
Y\beta^{\ast\;T}X\right)  \right)  \left(  1+\exp\left(  -Y\beta^{\ast
\;T}X\right)  \right)  }\right)  \left\Vert \zeta\right\Vert _{\alpha
\text{-}(p,s)}.
\end{align*}
The first inequality is due to application of triangle inequality in
Proposition \ref{Thm-Dual-Norm}, while the second estimate follows from
H\"{o}lder's inequality and $Y\in\{-1,+1\}$. Since we assume positive
probability density for the predictor $X$, we can argue that, if $\left\Vert
\zeta\right\Vert _{\alpha\text{-}(p,s)}=\left(  1-\epsilon\right)  ^{-2}>1$
and $\epsilon>0$ is chosen arbitrarily small, we can conclude from the above
estimate that, we have
\[
\left\Vert \zeta^{T}\frac{y\left(  1+\exp\left(  YX^{T}\beta^{\ast}\right)
\right)  I_{d\times d}-XX^{T}}{\left(  1+\exp\left(  YX^{T}\beta^{\ast
}\right)  \right)  ^{2}}\right\Vert _{\alpha\text{-}(p,s)}>1.
\]
Thus, we proved the claim that $A\subset\left\{  \zeta,\left\Vert
\zeta\right\Vert _{\alpha\text{-}(p,s)}\leq1\right\}  $. The stochastic upper
bound is derived by replacing $A$ by $\left\{  \zeta,\left\Vert \zeta
\right\Vert _{\alpha\text{-}(p,s)}\leq1\right\}  $, i.e.
\[
L_{3}=\sup_{\zeta\in A}\zeta^{T}Z\leq\sup_{\left\Vert \zeta\right\Vert
_{\alpha\text{-}(p,s)}\leq1}\zeta^{T}Z=\left\Vert Z\right\Vert _{\alpha
^{-1}\text{-}(q,t)},
\]
where the final estimation is due to dual norm structure in Proposition
\ref{Thm-Dual-Norm}. Since we know, $\frac{1}{1+\exp\left(  YX^{T}%
\beta\right)  }\leq1$, it is easy to argue, $Var(\tilde{Z})-Var(Z)$ is
positive semidefinite, thus, we know $\left\Vert Z\right\Vert _{\alpha
^{-1}\text{-}(q,t)}$ is stochastic dominated by $L_{4}:=\left\Vert \tilde
{Z}\right\Vert _{\alpha^{-1}\text{-}(q,t)}$. Hence, we obtain $L_{3}\leq
L_{4}$. 
\end{proof}

\subsection{Technical Results for Optimal Regularization in GSRL for High
Dimensional Linear Regression\label{Sec-Supply-Highdim}}

We conclude the section by exploring the behavior of the
optimal distributional uncertainty (in the sense of optimality presented in
Section \ref{Sec-Asymptotic}) as the dimension increases. This is an analog of
the high-dimension result for SR-Lasso as Theorem 6 in
\citeapos{blanchet2016robust}.

\begin{theorem}
[RWP Function Asymptotic Results for High-dimension]Suppose that assumptions
in Theorem \ref{Thm-RWPI-Asymptotic_Lin} hold and select $p=2$, $s=1$ let us
write$\tilde{g}^{-1}=\left(  \sqrt{g_{1}},\ldots,\sqrt{g_{\bar{d}}}\right)
^{T}$ (so $\alpha_{j}=\sqrt{g_{j}}$) and ${\tilde{g}}^{-1/2}=\left(
1/\sqrt{g_{1}},\ldots,1/\sqrt{g_{\bar{d}}}\right)  ^{T}$ respectively.
Moreover, let us define $C\left(  n,d\right)  $
\[
C\left(  n,d\right)  =\frac{\mathbb{E}\left\Vert X\right\Vert _{\sqrt{\bar{d}%
}\text{-}(2,1)}}{\sqrt{n}}=\frac{\mathbb{E}\left[  \max_{i=1}^{\bar{d}}%
\sqrt{g_{i}}\left\Vert X\left(  G_{i}\right)  \right\Vert _{2}\right]  }%
{\sqrt{n}}.
\]
Assume that largest eigenvalue of $\Sigma$ is of order $o\left(  nC\left(
n,d\right)  ^{2}\right)  $, that $\beta^{\ast}$ satisfies a weak sparsity
condition, namely, $\left\Vert \beta^{\ast}\right\Vert _{\sqrt{\tilde{g}%
}\text{-}(2,1)}=o\left(  1/C\left(  n,d\right)  \right)  $. Then,
\[
nR_{n}\left(  \beta^{\ast}\right)  \lesssim_{D}\frac{\left\Vert Z_{n}%
\right\Vert _{{\tilde{g}}^{-1/2}\text{-}(2,\infty)}}{Var\left(  \left\vert
e\right\vert \right)  },
\]
as $n,d\rightarrow\infty$, where $Z_{n}:=n^{-1/2}\sum_{i=1}^{n}e_{i}X_{i}$.
\end{theorem}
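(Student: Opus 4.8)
The plan is to follow the architecture of the proof of Theorem~\ref{Thm-RWPI-Asymptotic_Lin}, replacing each step that used a fixed ambient dimension by an estimate that survives $d\to\infty$. First I would invoke the strong duality for the semi-infinite linear program (as in \citeapos{blanchet2016robust}) to write
\[
nR_n(\beta^\ast)=\sup_{\zeta}\Big\{-\zeta^T Z_n-\mathbb{E}_{P_n}\phi\left(X_i,Y_i,\beta^\ast,\zeta\right)\Big\},
\]
with $Z_n=n^{-1/2}\sum_{i=1}^n e_iX_i$ and $\phi$ the inner maximization over $\Delta$ carrying the quadratic penalty $\|\Delta\|_{\alpha^{-1}\text{-}(q,t)}^2$ together with the cross term $n^{-1/2}(\beta^{\ast T}\Delta)(\zeta^T\Delta)$. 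Specializing to $p=2,\,s=1$ (so $q=2,\,t=\infty$), the primal norm is $\sqrt{\tilde{g}}\text{-}(2,1)$ and its dual is $\tilde{g}^{-1/2}\text{-}(2,\infty)$. Using the dual-norm identities of Proposition~\ref{Thm-Dual-Norm} I would carry out the inner maximization: writing $\zeta=r\eta$ and $\Delta=t\,w$ with unit normalizations reduces the inner problem to $\sup_{t\ge 0}\{t\,rA_i-t^2(1+n^{-1/2}rB)\}$, where $A_i=e_i\eta^T w-(\beta^{\ast T}w)(\eta^T X_i)$ and $B=(\beta^{\ast T}w)(\eta^T w)$, and maximizing over the direction $w$ recovers $\|e_i\eta-(\eta^T X_i)\beta^\ast\|_{\alpha\text{-}(p,s)}$. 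This exhibits the key structural difference from the fixed-dimensional setting: the cross term $B$ perturbs the effective quadratic coefficient, and it is this perturbation (negligible when $d$ is fixed) that reshapes the scaling in growing dimension.

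The second step is to pass from $\mathbb{E}_{P_n}\phi$ to a deterministic estimate. In the fixed-dimensional proof this relied on the compactification of $\zeta$ (Lemma~2 of \citeapos{blanchet2016robust}) and the uniform law of large numbers (Lemma~3 there), neither of which applies verbatim once $d$ grows with $n$. Instead I would bound the two error sources explicitly. The cross term is handled by H\"older's inequality,
\[
\big|(\beta^{\ast T}\Delta)(\zeta^T\Delta)\big|\le \|\beta^\ast\|_{\alpha\text{-}(p,s)}\,\|\zeta\|_{\alpha\text{-}(p,s)}\,\|\Delta\|_{\alpha^{-1}\text{-}(q,t)}^2,
\]
so that the weak-sparsity hypothesis $\|\beta^\ast\|_{\sqrt{\tilde{g}}\text{-}(2,1)}=o(1/C(n,d))$ makes $n^{-1/2}$ times this quantity small relative to the penalty on the scale of $\zeta$ set by $C(n,d)$. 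The fluctuation of the empirical quadratic form $\mathbb{E}_{P_n}(\eta^T X)^2$ about its population value is controlled by the eigenvalue hypothesis $\lambda_{\max}(\Sigma)=o\left(nC(n,d)^2\right)$, which is precisely the normalization that forces the empirical-versus-population gap to vanish uniformly over the directions $\eta$ that matter.

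With these two estimates in hand, the third step optimizes over the magnitude of $\zeta$. Applying H\"older to the linear term gives $-\zeta^T Z_n\le \|\zeta\|_{\alpha\text{-}(p,s)}\,\|Z_n\|_{\tilde{g}^{-1/2}\text{-}(2,\infty)}$, while the triangle inequality of Proposition~\ref{Thm-Dual-Norm} bounds the quadratic coefficient from below,
\[
\min_{\|\zeta\|_{\alpha\text{-}(p,s)}=1}\mathbb{E}\big[\|e\zeta-(\zeta^T X)\beta^\ast\|_{\alpha\text{-}(p,s)}^2\big]\ \ge\ \mathrm{Var}(|e|),
\]
exactly as in the linear-regression case, producing the $\mathrm{Var}(|e|)$ in the denominator. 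Solving the resulting one-dimensional optimization over $r=\|\zeta\|_{\alpha\text{-}(p,s)}$ with the cross-term correction retained, and invoking the two high-dimensional estimates above, then yields the asserted bound $nR_n(\beta^\ast)\lesssim_D\|Z_n\|_{\tilde{g}^{-1/2}\text{-}(2,\infty)}/\mathrm{Var}(|e|)$.

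I expect the main obstacle to be the second step: obtaining simultaneous, uniform control of the cross term and of the empirical quadratic form in growing dimension, which is exactly where the fixed-dimensional compactification and uniform law of large numbers break down. The three hypotheses --- the definition of $C(n,d)$, the eigenvalue bound $\lambda_{\max}(\Sigma)=o(nC(n,d)^2)$, and the sparsity $\|\beta^\ast\|_{\sqrt{\tilde{g}}\text{-}(2,1)}=o(1/C(n,d))$ --- are what quantify these two errors, and arranging the scalings so that both are negligible on the same range of $\zeta$, while the cross-term correction responsible for the final form of the bound is retained, is the delicate point that dictates the precise shape of the assumptions.
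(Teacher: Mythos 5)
Your proposal is correct and follows essentially the same route as the paper's proof: strong duality, H\"older's inequality on the cross term $(\beta^{\ast T}\Delta)(\zeta^T\Delta)$ combined with the weak-sparsity condition, a polar decomposition of $\zeta$ with H\"older applied to $\zeta^T Z_n$, and the triangle-inequality/Jensen lower bound on the quadratic coefficient yielding $\mathrm{Var}(|e|)$, with the eigenvalue hypothesis on $\Sigma$ controlling the empirical fluctuations uniformly over unit directions. The only cosmetic difference is that the paper works throughout with the empirical average $\frac{1}{n}\sum_i\|e_i\zeta-(\zeta^TX_i)\beta^\ast\|^2$ and shows it is bounded below by $\mathrm{Var}_n(|e|)+o_p(1)$, which is exactly the concentration step you flag as the delicate point.
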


\begin{proof}
For linear regression model with square loss function, the RWP function is
defined as in equation \eqref{Eqn-RWP-Linear}. By considering the cost
function as in Theorem \ref{Thm-Dro-GLasso-Regression} and applying the strong
duality results in the Appendix of \citeapos{blanchet2016robust}, we can write the scaled RWP function in the
dual form as,
\begin{align*}
nR_{n}\left(  \beta^{\ast}\right)  = &  \sup_{\zeta}\big\{-\zeta^{T}Z_{n}\\
&  -\frac{1}{\sqrt{n}}\sum_{i=1}^{n}\sup_{\Delta}\{e_{i}\zeta^{T}%
\Delta-\left(  \beta^{\ast\;T}\Delta\right)  \left(  \zeta^{T}X_{i}\right)
-\left(  \sqrt{n}\left\Vert \Delta\right\Vert _{{\tilde{g}}^{-1/2}%
\text{-}(2,\infty)}^{2}+\left(  \beta^{\ast\;T}\Delta\right)  \left(
\zeta^{T}\Delta\right)  \right)  \}\big\}.
\end{align*}
For each $i-$th inner optimization problem, we can apply H\"{o}lder inequality
in Proposition \ref{Thm-Dual-Norm} for the term $\left(  \beta^{\ast\;T}\Delta\right)  \left(  \zeta^{T}\Delta\right)  $, we have an upper bound
for the scaled RWP function, i.e.
\begin{align*}
nR_{n} &  \left(  \beta^{\ast}\right)  \leq\sup_{\zeta}\big\{-\zeta^{T}Z_{n}\\
&  -\frac{1}{\sqrt{n}}\sum_{i=1}^{n}\sup_{\Delta}\{\left(  e_{i}\zeta-\left(
\zeta^{T}X_{i}\right)  \beta^{\ast}\right)  ^{T}\Delta-\sqrt{n}\left(
1-\frac{\left\Vert \beta^{\ast}\right\Vert _{\tilde{g}^{-1}\text{-}%
(p,s)}\left\Vert \zeta\right\Vert _{\tilde{g}^{-1}\text{-}(p,s)}}{\sqrt{n}%
}\right)  \left\Vert \Delta\right\Vert _{{\tilde{g}}^{-1/2}\text{-}(q,t)}%
^{2}\}\big\}.
\end{align*}
Since the coefficients for each inner optimization problem is negative and we
can get an upper bound for RWP function if we do not fully optimize the inner
optimization problem. For each $i$, let us take $\Delta$ to the direction
satisfying the H\"{o}lder inequality in Property \ref{Thm-Dual-Norm} for the
term $\left(  e_{i}\zeta-\left(  \zeta^{T}X_{i}\right)  \beta^{\ast}\right)
^{T}\Delta$ and only optimize the magnitude of $\Delta$, for simplicity let us
denote $\gamma=\left\Vert \Delta\right\Vert _{{\tilde{g}}^{-1/2}\text{-}%
(q,t)}$. \newline We have,
\begin{align*}
nR_{n} &  \left(  \beta^{\ast}\right)  \leq\sup_{\zeta}\big\{-\zeta^{T}Z_{n}\\
&  -\frac{1}{\sqrt{n}}\sum_{i=1}^{n}\sup_{\gamma}\{\gamma\left\Vert e_{i}%
\zeta-\left(  \zeta^{T}X_{i}\right)  \beta^{\ast}\right\Vert _{\sqrt
{g}\text{-}(p,s)}-\sqrt{n}\left(  1-\frac{\left\Vert \beta^{\ast}\right\Vert
_{\tilde{g}^{-1}\text{-}(p,s)}\left\Vert \zeta\right\Vert _{\sqrt{\tilde{g}%
}\text{-}(p,s)}}{\sqrt{n}}\right)  \gamma^{2}\}\big\}.
\end{align*}
For each inner optimization problem it is of quadratic form in $\gamma$,
especially, when $n$ is large the coefficients for the second order term will
be negative, thus, as $n\rightarrow\infty$, we can solve the inner
optimization problem and obtain,
\begin{align*}
&  nR_{n}\left(  \beta^{\ast}\right)  \\
&  \leq\sup_{\zeta}\big\{-\zeta^{T}Z_{n}-\frac{1}{4\left(  1-{\left\Vert
\beta^{\ast}\right\Vert _{g^{-1}\text{-}(p,s)}\left\Vert \zeta\right\Vert
_{\tilde{g}^{-1}\text{-}(p,s)}}{n}^{-1/2}\right)  }\frac{1}{{n}}\sum
_{i=1}^{n}\left\Vert e_{i}\zeta-\left(  \zeta^{T}X_{i}\right)  \beta_{\ast
}\right\Vert _{\tilde{g}^{-1}\text{-}(p,s)}^{2}\big\}\\
&  =\sup_{a\geq0}\sup_{\zeta:\left\Vert \zeta\right\Vert _{\sqrt{\tilde{g}%
}\text{-}(p,s)}=1}\big\{-a\zeta^{T}Z_{n}-\frac{a^{2}}{4\left(  1-\left\Vert
\beta^{\ast}\right\Vert _{\tilde{g}^{-1}\text{-}(p,s)}an^{-1/2}\right)
}\frac{1}{{n}}\sum_{i=1}^{n}\left\Vert e_{i}\zeta-\left(  \zeta^{T}%
X_{i}\right)  \beta^{\ast}\right\Vert _{\tilde{g}^{-1}\text{-}(p,s)}%
^{2}\big\}.
\end{align*}
The equality above is due to changing to polar coordinate for the ball under
$\tilde{g}^{-1}\text{-}(p,s)$ norm. For the first term, $\zeta^{T}Z_{n}$,
when $\left\Vert \zeta\right\Vert _{\tilde{g}^{-1}\text{-}(p,s)}=1$, we can
apply H\"{o}lder inequality again, i.e. $\left\vert \zeta^{T}Z_{n}\right\vert
\leq\left\Vert Z_{n}\right\Vert _{{\tilde{g}}^{-1/2}\text{-}(q,t)}$. Then,
only the second term in the previous display involves the direction of $\zeta
$, thus we can have
\begin{align*}
nR_{n}\left(  \beta^{\ast}\right)  \leq &  \sup_{a\geq0}\big\{a\left\Vert
Z_{n}\right\Vert _{{\tilde{g}}^{-1/2}\text{-}(q,t)}\\
&  -\frac{a^{2}}{4\left(  1-{\left\Vert \beta^{\ast}\right\Vert _{\sqrt
{g}\text{-}(p,s)}a}{n}^{-1/2}\right)  }\inf_{\zeta:\left\Vert \zeta\right\Vert
_{\tilde{g}^{-1}\text{-}(p,s)}=1}\frac{1}{{n}}\sum_{i=1}^{n}\left\Vert
e_{i}\zeta-\left(  \zeta^{T}X_{i}\right)  \beta^{\ast}\right\Vert _{\sqrt
{g}\text{-}(p,s)}^{2}\big\}.
\end{align*}
By the weak sparsity assumption, we have $\left\Vert \beta^{\ast}\right\Vert
_{\tilde{g}^{-1}\text{-}(p,s)}{n}^{-1/2}\rightarrow0$ as $n\rightarrow
\infty$, the supremum over $a$ is attained at
\[
a_{{\ast}}=\frac{2\left\Vert Z_{n}\right\Vert _{{\tilde{g}}^{-1/2}%
\text{-}(q,t)}}{\inf_{\zeta:\left\Vert \zeta\right\Vert _{\sqrt{\tilde{g}%
}\text{-}(p,s)}=1}\frac{1}{{n}}\sum_{i=1}^{n}\left\Vert e_{i}\zeta-\left(
\zeta^{T}X_{i}\right)  \beta^{\ast}\right\Vert _{g^{-1}\text{-}(p,s)}^{2}%
}+o(1),
\]
as $n\rightarrow\infty$. Therefore, we have the upper bound estimator for the
scaled RWP function,
\begin{equation}
nR_{n}\left(  \beta^{\ast}\right)  \leq\frac{\left\Vert Z_{n}\right\Vert
_{{\tilde{g}}^{-1/2}\text{-}(q,t)}^{2}}{\inf_{\zeta:\left\Vert \zeta
\right\Vert _{\tilde{g}^{-1}\text{-}(p,s)}=1}\frac{1}{{n}}\sum_{i=1}%
^{n}\left\Vert e_{i}\zeta-\left(  \zeta^{T}X_{i}\right)  \beta_{\ast
}\right\Vert _{g^{-1}\text{-}(p,s)}^{2}}+o_{p}(1).\label{Eqn-RWP_upper}%
\end{equation}
To get the final result, we try to find a lower bound for the infimum in the
denominator. For the objective function in the denominator, since we optimize
on the surface $\left\Vert \zeta\right\Vert _{\tilde{g}^{-1}\text{-}%
(p,s)}=1$, and due to the triangle inequality analysis in Proposition
\ref{Thm-Dual-Norm}, we have
\begin{align*}
&  \quad\frac{1}{{n}}\sum_{i=1}^{n}\left\Vert e_{i}\zeta-\left(  \zeta
^{T}X_{i}\right)  \beta^{\ast}\right\Vert _{\tilde{g}^{-1}\text{-}(p,s)}%
^{2}\\
&\geq\frac{1}{{n}}\sum_{i=1}^{n}\left(  \left\vert e_{i}\right\vert
\left\Vert \zeta\right\Vert _{\tilde{g}^{-1}\text{-}(p,s)}-\left\vert
\zeta^{T}X_{i}\right\vert \left\Vert \beta^{\ast}\right\Vert _{\sqrt{\tilde
{g}}\text{-}(p,s)}\right)  ^{2}\\
&  =\frac{1}{{n}}\sum_{i=1}^{n}\left\vert e_{i}\right\vert ^{2}+\left\Vert
\beta^{\ast}\right\Vert _{\tilde{g}^{-1}\text{-}(p,s)}^{2}\frac{1}{{n}}%
\sum_{i=1}^{n}\left\vert \zeta^{T}X_{i}\right\vert ^{2}-2\left\Vert
\beta^{\ast}\right\Vert _{\tilde{g}^{-1}\text{-}(p,s)}\mathbb{E}\left[
\left\vert e_{i}\right\vert \right]  \frac{1}{{n}}\sum_{i=1}^{n}\left\vert
\zeta^{T}X_{i}\right\vert -\epsilon_{n}\left(  \zeta\right)  ,
\end{align*}
where $\epsilon_{n}\left(  \zeta\right)  =2\left\Vert \beta^{\ast}\right\Vert
_{\tilde{g}^{-1}\text{-}(p,s)}\frac{1}{{n}}\sum_{i=1}^{n}\left(  \left\vert
e_{i}\right\vert -\mathbb{E}\left[  \left\vert e_{i}\right\vert \right]
\right)  $. Let us denote the pseudo error to be $\tilde{e}_{i}=\left\vert
e_{i}\right\vert -\mathbb{E}\left[  \left\vert e_{i}\right\vert \right]  $,
which has mean zero and $Var\left[  \tilde{e}_{i}\right]  \leq Var\left[
e_{i}\right]  $. Since $e_{i}$ is independent of $X_{i}$ we have that
\begin{align*}
&  \mathbb{E}\left[  \tilde{e}_{i}\left\vert \zeta^{T}X_{i}\right\vert
\right]  =0,\\
&  Var\left[  \tilde{e}_{i}\left\vert \zeta^{T}X_{i}\right\vert \right]
=Var\left[  \tilde{e}_{i}\right]  \zeta^{T}\Sigma\zeta\leq Var\left[  {e}%
_{i}\right]  \zeta^{T}\Sigma\zeta.
\end{align*}
By our assumptions on the eigenstructure of $\Sigma$, i.e. $\lambda_{\max
}\left(  \Sigma\right)  =o\left(  nC(n,d)^{2}\right)  $, for the case $p=2$
and $s=1$, we have
\[
\sup_{\zeta:\left\Vert \zeta\right\Vert _{\tilde{g}^{-1}\text{-}(2,1)}%
=1}\zeta^{T}\Sigma\zeta\leq\sup_{\zeta:\left\Vert \zeta\right\Vert
_{\tilde{g}^{-1}\text{-}(2,1)}=1}\lambda_{\max}\left(  \Sigma\right)
\left\Vert \zeta\right\Vert _{2}\leq\lambda_{\max}\left(  \Sigma\right)
=o\left(  nC(n,d)^{2}\right)  .
\]
Then, we have the variance of $\frac{1}{{n}}\sum_{i=1}^{n}\left\vert \zeta
^{T}X_{i}\right\vert $ is of order $o\left(  C(n,d)^{2}\right)  $ uniformly on
$\left\Vert \zeta\right\Vert _{\tilde{g}^{-1}\text{-}(p,s)}=1$. Combining
this estimate with the weak sparsity assumption that we have imposed, we have
\[
\epsilon_{n}\left(  \zeta\right)  =o_{p}(1).
\]
Since the estimate is uniform over $\left\Vert \zeta\right\Vert _{\sqrt
{\tilde{g}}\text{-}(2,1)}=1$, we have that for $n$ sufficiently large,
\begin{align*}
&  \frac{1}{{n}}\sum_{i=1}^{n}\left\Vert e_{i}\zeta-\left(  \zeta^{T}%
X_{i}\right)  \beta^{\ast}\right\Vert _{\tilde{g}^{-1}\text{-}(2,1)}^{2}\\
&  \geq\frac{1}{{n}}\sum_{i=1}^{n}\left\vert e_{i}\right\vert ^{2}-\left(
\mathbb{E}\left[  \left\vert e_{i}\right\vert \right]  \right)  ^{2}%
+\inf_{\zeta:\left\Vert \zeta\right\Vert _{\tilde{g}^{-1}\text{-}(2,1)}%
=1}\left(  \left\Vert \beta^{\ast}\right\Vert _{\tilde{g}^{-1}\text{-}%
(2,1)}\frac{1}{{n}}\sum_{i=1}^{n}\left\vert \zeta^{T}X_{i}\right\vert
-\mathbb{E}\left[  \left\vert e_{i}\right\vert \right]  \right)  ^{2}%
+o_{p}(1)\\
&  \geq Var_{n}\left[  \left\vert e_{i}\right\vert \right]  +o_{p}(1).
\end{align*}
Combining the above estimate and equation \eqref{Eqn-RWP_upper}, when $p=q=2$,
$s=1$ and $t=\infty$, we have that
\[
nR_{n}\left(  \beta^{\ast}\right)  \leq\frac{\left\Vert Z_{n}\right\Vert
_{{\tilde{g}}^{-1/2}\text{-}(2,\infty)}^{2}}{Var\left[  \left\vert
e\right\vert \right]  }+o_{p}(1),
\]
as $n\rightarrow\infty$.
\end{proof}

\end{document}